\numberwithin{equation}{section}
\newtheorem{thm}{Theorem}[section]
\newtheorem{definition}[thm]{Definition}
\newtheorem{remark}[thm]{Remark}
\newtheorem{prop}[thm]{Proposition}
\author{Yiqing Lin}
\author{Kun Xu\footnote{Corresponding author. Email address: \url{1949101x_k@sjtu.edu.cn} (K. Xu).}}
\affil[1]{\small{School of Mathematical Sciences, Shanghai Jiao Tong University, 200240 Shanghai, China.}}
\begin{document}

\title{Particle systems for mean reflected BSDEs with jumps}

\date{April 2, 2024}
\maketitle
%
%
%
\begin{abstract}
In this paper, we study the mean reflected backward stochastic differential equations with jump (BSDEJs). We extend the work of Briand and Hibon on the propagation of chaos for mean reflected BSDEs \cite{briand2021particles} to the jump framework. Besides, we study the reflections for the particle system and obtain the rate of of convergence of the particle system towards the deterministic flat solution to the mean reflected BSDEJ.

\end{abstract}

{\bf Keywords:} Particle systems, mean reflection, jumps.

\section{Introduction}
Pardoux and Peng \cite{pardoux1990adapted} first built the well-posedness of the nonlinear backward stochastic differential equations (BSDEs) driven by a Brownian motion with a Lipschitz continuous generator in 1990. After this seminal work, numbers of generalizations on the well-posedness of different types of BSDEs have been investigated. See \cite{kobylanski2000backward,briand2006bsde,briand2008quadratic,tevzadze2008solvability}. In order to solve the obstacle problem in partial differential equations, reflected backward stochastic differential equations (RBSDEs) was firstly introduced in El Karoui et al. \cite{Karoui1997ReflectedSO}. After that, 
Many studies related to RBSDEs have appeared, we refer the readers to Matoussi \cite{matoussi1997reflected}, Kobylanski et al. \cite{kobylanski2002reflected}, Lepeltier and Xu \cite{lepeltier2007reflected}, Bayrakstar and Yao \cite{bayraktar2012quadratic}, Essaky and Hassani \cite{essaky2011general}, Ren and Xia \cite{ren2006generalized}, Jia and Xu \cite{jia2008construction} and so on. 

In contrast to the pointwisely reflected BSDEs, mean reflected BSDEs were introduced in \cite{briand2018bsdes}. The formulation of the restricted BSDE reads,
\begin{equation}
\label{original}
\left\{\begin{array}{l}
 Y_t=\xi+\int_t^T f\left(s, Y_s, Z_s\right) d s-\int_t^T Z_s d W_s+K_T-K_t, \quad 0 \leq t \leq T; \\
\mathbb{E}\left[\ell\left(t, Y_t\right)\right] \geq 0, \quad 0 \leq t \leq T ,
\end{array}\right.
\end{equation}
where  $K$ is a deterministic process and $\ell$ is  a running loss function. The well-posedness of  such BSDEs with mean reflection was generalized in \cite{Hibon2017quadratic, hu2022general}, with quadratic generator and bounded or unbounded terminal condition. The following type of Skorokhod condition,
$$
\int_0^T \mathbb{E}\left[\ell\left(t, Y_t\right)\right] d K_t=0,
$$
ensures the existence and uniqueness of the so-called deterministic flat solution. This kind of BSDEs can help a lot when dealing with super-hedging problems under running risk management constraints. 

Apart from BSDEs in a Brownian framework, the generalizations of BSDEs to a setting with jumps enlarges the scope of applications of BSDEs, for instance in insurance modeling which is discussed in Liu and Ma \cite{Liu_2009}. Li and Tang \cite{Tang_1994} and Barles, Buckdahn and Pardoux \cite{barles1997backward} obtained a wellposedness result for Lipschitz BSDEs with jumps (BSDEJ), using a fixed point approach similar to that used in \cite{pardoux1990adapted}. Since then, studies on different kind of BSDEJs have appeared. BSDEJs driven by quadratic coefficients were studied by Becherer \cite{Becherer_2006} and Morlais \cite{morlais2010new} in an exponential utility maximization problem. Antonelli and Mancini \cite{antonelli2016solutions} gave the well-posedness of BSDEJs with local Lipschitz drivers. Besides, the teminal has a bounded condition in \cite{Becherer_2006,morlais2010new,antonelli2016solutions}. Similar results can be found in \cite{cohen2015stochastic,kazi2015quadratic,tevzadze2008solvability}. Barrieu and El Karoui \cite{Barrieu_2013} showed the existence of a solution with unbounded terminal under a quadratic structure condition in a continuous setup based on the stability of quadratic semimartingales. Moreover, studies on BSDEJs with a quadratic exponential structure can be found in \cite{ngoupeyou2010optimisation,jeanblanc2012robust,karoui2016quadratic,kaakai2022utility}. The well-posedness of BSDEs driven by general marked point processes were investigated in Confortola \& Fuhrman \cite{Confortola2013} for the weighted-$L^2$ solution,  Becherer \cite{Becherer_2006} and Confortola \& Fuhrman \cite{Confortola_2014} for the $L^2$ case,  Confortola,  Fuhrman \& Jacod \cite{Confortola2016} for the $L^1$ case and Confortola \cite{Confortola_2018} for the $L^p$ case. A more general BSDE with both Brownian motion diffusion term and a very general marked point process, which is non-explosive and has totally inaccessible jumps was studied in Foresta \cite{foresta2021optimal}. 

This paper generalizes the mean reflected BSDEs to a setting with jumps. Precisely, in this paper we study the following mean reflected BSDE with jump 
\begin{equation}
\label{eq 0}
\left\{\begin{array}{l}
Y_t =\xi+\int_t^T f\left(s, Y_s, U_s\right) ds -\int_t^T \int_E U_s(e) \Tilde{\mu}(d s, d e) + \left(K_T-K_t\right) , \quad \forall t \in[0, T] \text { a.s. };\\
\mathbb{E}\left[l\left(t, Y_t\right)\right] \geq 0, \quad \forall t \in[0, T].
\end{array}\right.
\end{equation}
We refer the reader to Section \ref{sec P} for more precise definitions and notations.

The theory of propagation of chaos can be traced back to the work by Kac \cite{kac1956foundations} whose initial
aim was to investigate the particle system approximation of some nonlocal partial differential equations (PDEs) arising in thermodynamics. Kac’s intuition was put into firm mathematical ground notably by Henry P McKean \cite{mckean1967propagation}, Alain-Sol Sznitman \cite{sznitman1991topics} and J{\"u}rgen G{\"a}rtner \cite{gartner1988mckean}. Further development and applications of propagation of chaos theory can be found in \cite{jabin2016mean,lacker2018strong,shkolnikov2012large}. Besides, Buckdahn et al. \cite{buckdahn2009mean}, Hu, Ren and Yang 
\cite{hu2023principal}, Laurière and Tangpi \cite{lauriere2022backward} and Briand et al. \cite{briand2020forward} studied the limit theorems for weakly interacting particles whose dynamics is given by a system of BSDEs in the case of non-reflected BSDE driven by Brownian motion. Li \cite{li2014reflected} extends the results of \cite{buckdahn2009mean} to reflected BSDEs where the weak interaction enters only the driver, while the work by Briand and Hibon \cite{briand2021particles} considers a particular class of mean reflected BSDEs. We extend in this paper the work of Briand and Hibon on the propagation of chaos for mean reflected BSDEs \cite{briand2021particles} to the jump framework.

The paper is organized as follows. In Section \ref{sec P}, we introduce some basic notations. Section 3 contains the well-posedness of (\ref{eq 0}) and some regularity result on $K$ and $Y$. Finally, in the last section, we prove that BSDEs coming from these particles systems have a unique solution and we prove that the solution of this system converges to the solution of the mean reflected BSDEJ. We give also the rate of convergence of the propagation of chaos.

\section{Preliminaries}
\label{sec P}
\subsection{General setting}
Let $\left(\Omega, \mathcal{F},\left\{\mathcal{F}_t\right\}_{0 \leq t \leq T}, \mathbb{P}\right)$ be a filtered probability space, whose filtration satisfies the usual hypotheses of completeness and right-continuity. We suppose that this filtration is generated by by the following two mutually independent processes:
\begin{itemize}
    \item a $d$-dimensional standard Brownian motion $\left\{W_t\right\}_{t \geq 0}$, and
    \item a Poisson random measure $\mu$ on $\mathbb{R}_{+} \times E$, where $E \triangleq \mathbb{R}^{\ell} \backslash\{0\}$ is equipped with its Borel field $\mathcal{E}$, with compensator $\lambda(\omega, d t, d e)$. We assume in all the paper that $\lambda$ is absolutely continuous with respect to the Lebesgue measure $d t$, i.e. $\lambda(\omega, d t, d e)=\nu_t(\omega, d e) d t$. Finally, we denote $\tilde{\mu}$ the compensated jump measure
    $$
    \widetilde{\mu}(\omega, d e, d t)=\mu(\omega, d e, d t)-\nu_t(\omega, d e) d t .
    $$
\end{itemize}

Denote by $\mathbb{F}:=\left\{\mathcal{F}_t\right\}_{t \in[0, T]}$ the completion of the filtration generated by $\tilde{\mu}$. 
Following Li and Tang \cite{Tang_1994} and Barles, Buckdahn and Pardoux \cite{barles1997backward}, the definition of a BSDE with jumps is then
\begin{definition}
Let $\xi$ be a $\mathcal{F}_T$-measurable random variable. A solution to the BSDEJ with terminal condition $\xi$ and generator $f$ is a triple $(Y, Z, U)$ of progressively measurable processes such that
\begin{equation}
\label{eq defn}
Y_t=\xi+\int_t^T f_s\left(Y_s, Z_s, U_s\right) d s-\int_t^T Z_s d B_s-\int_t^T \int_E U_s(x) \widetilde{\mu}(d s, d e), t \in[0, T], \mathbb{P}-a . s .,
\end{equation}
where $f: \Omega \times[0, T] \times \mathbb{R} \times \mathbb{R}^d \times \mathcal{A}(E) \rightarrow \mathbb{R}$ is a given application and
$$
\mathcal{A}(E):=\{u: E \rightarrow \mathbb{R}, \mathcal{B}(E)-\text { measurable }\} .
$$

Then, the processes $Z$ and $U$ are supposed to satisfy the minimal assumptions so that the quantities in (\ref{eq defn}) are well defined:
$$
\int_0^T\left|Z_t\right|^2 d t<+\infty,\left(\operatorname{resp} . \int_0^T \int_E\left|U_t(x)\right|^2 \nu_t(d x) d t<+\infty\right), \mathbb{P}-\text { a.s. }
$$
\end{definition}

As the Brownian motion $B$ and the integer valued random measure $\mu$ is independent, proof related to the two does not interfere with each other. In order to highlight the key points of this article, we only consider equation without the Brownian motion term like (\ref{eq 0}).

\subsection{Notation}

We denote a generic constant by $C$, which may change line by line, is sometimes associated with several subscripts (such as $C_{K,T}$ ) showing its dependence when necessary. Let us introduce the following spaces for stochastic processes:
\begin{itemize}
    \item For any real $p \geq 1, \mathcal{S}^p$ denotes the set of real-valued, adapted and c\`adl\`ag processes $\left\{Y_t\right\}_{t \in[0, T]}$ such that
    $$
    \|Y\|_{\mathcal{S}^p}:=\mathbb{E}\left[\sup _{0 \leq t \leq T}\left|Y_t\right|^p\right]^{1 / p}<+\infty.
    $$
    Then $\left(\mathcal{S}^p,\|\cdot\|_{ \mathcal{S}^p}\right)$ is a Banach space.
    \item $\mathscr{M}^{2,p}$ the set of predictable processes $U$ such that $$\|U\|_{\mathscr{M}^{2,p}}:=\left(\mathbb{E}\left[\int_{[0, T]} \int_E\left|U_s(e)\right|^2 \nu_s(de) d s\right ]^{\frac{p}{2}}\right)^{\frac{1}{p}}<\infty.$$
    \item $\mathcal{S}^\infty$ is the space of $\mathbb{R}$-valued c\`adl\`ag and $\mathbb F$-progressively measurable processes $Y$ such that
    $$
    \|Y\|_{\mathcal{S}^\infty}:=\left\|\sup _{0 \leq t \leq T}|Y_t|\right\|_{\infty}<+\infty.
    $$
    \item $\mathcal{J}^{\infty}$ is the space of functions which are $d \mathbb{P} \otimes v(d z)$ essentially bounded i.e.,
    $$
    \|\psi\|_{\mathcal{J}^{\infty}}:=\left\|\sup _{t \in[0, T]}\| \psi_t\|_{\mathcal{L}^{\infty}(v)}\right\|_{\infty}<\infty,
    $$
    where $\mathcal{L}^{\infty}(v)$ is the space of $\mathbb{R}^k$-valued measurable functions $v(d z)$-a.e. bounded endowed with the usual essential sup-norm.
    \item $\mathcal{P}_p(\mathbb{R})$ is the collection of all probability measures over $(\mathbb{R}, \mathcal{B}(\mathbb{R}))$ with finite $p^{\text {th }}$ moment, endowed with the $p$-Wasserstein distance $W_p$;
    \item $\mathcal{A}^2$ the closed subset of $\mathcal{S}^2$ consisting of non-decreasing processes starting from 0.
    \item $\mathcal{A}^D$ is the closed subset of $\mathcal{S}^{\infty}$ consisting of non-decreasing processes $K=\left(K_t\right)_{0 \leq t \leq T}$ starting from the origin, i.e. $K_0 = 0$;
    \item $\mathcal{A}_D$ is the space of all càdlàg non-decreasing deterministic processes $K=\left(K_t\right)_{0 \leq t \leq T}$ starting from the origin;
    \item $\mathcal{A}_D^\infty$ is the closed subset of $\mathcal{S}^{\infty}$ consisting of deterministic non-decreasing processes $K=\left(K_t\right)_{0 \leq t \leq T}$ starting from the origin.
\end{itemize}

For $\beta>0$, we introduce the following spaces.
\begin{itemize}
    \item $L^{r, \beta}(s)$ is the space of all $\mathbb{F}$-progressive processes $X$ such that
    $$
    \|X\|_{L^{r, \beta}(s)}^r=\mathbb{E}\left[\int_0^T e^{\beta s}\left|X_s\right|^r d s\right]<\infty .
    $$
    \item $L^{r, \beta}(p)$ is the space of all $\mathbb{F}$-predictable processes $U$ such that
    $$
    \|U\|_{L^{r, \beta}(p)}^r=\mathbb{E}\left[\int_0^T \int_E e^{\beta s}\left|U_s(e)\right|^r \phi_s(d e) d s\right]<\infty.
    $$
\end{itemize}

\section{Well-posedness and some regularity results}
\label{sec 3}

In this section, we consider the following mean reflected BSDEJ:
\begin{equation}
\label{eq00}
\left\{\begin{array}{l}
Y_t =\xi+\int_t^T f\left(s, Y_s, U_s\right) ds -\int_t^T \int_E U_s(e) \Tilde{\mu}(d s, d e) + \left(K_T-K_t\right) , \quad \forall t \in[0, T], \quad \mathbb{P} \text {-a.s. };\\
\mathbb{E}\left[l\left(t, Y_t\right)\right] \geq 0, \quad \forall t \in[0, T] .
\end{array}\right.
\end{equation}

These parameters are supposed to satisfy the following standard running assumptions:

$\left(H_{\xi}\right)$ The terminal condition $\xi$ is an $\mathcal{F}_T$-measurable random variable bounded by some constant $M>0$ such that
$$
\mathbb{E}[l(T, \xi)] \geq 0 .
$$

$\left(H_f\right)$ The map $(\omega, t) \mapsto f(\omega, t, \cdot)$ is $\mathbb{F}$-progressively measurable.

(1) (Bounded condition) For each $t \in[0, T]$, $f(t, 0,0)$ is bounded by some constant $L$, $\mathbb{P}$-a.s.

(2) (Lipschitz condition) There exists $\lambda\geq 0$, such that for every $\omega \in \Omega,\ t \in[0, T],\ y_1, y_2 \in \mathbb{R}$,\ $u_1,u_2\in L^2(\mathcal{B}(E),\nu_t, \mathbb{R})$, we have
$$
\begin{aligned}
& \left|f(\omega, t, y_1, u_1)-f\left(\omega, t, y_2, u_2\right)\right| \leq \lambda\left(\left|y_1-y_2\right|+|u_1-u_2|_\nu\right).
\end{aligned}
$$

(3) (Growth condition) For all $t \in[0, T], \ (y,u) \in \mathbb{R} \times  L^2(\mathcal{B}(E),\nu_t; \mathbb R):\ \mathbb{P}$-a.s, there exists $\lambda>0$ such that,
$$
-\frac{1}{\lambda} j_{\lambda}(t,- u)-\alpha_t-\beta|y| \leq f(t, y, u) \leq \frac{1}{\lambda} j_{\lambda}(t, u)+\alpha_t+\beta|y|,
$$
where $\{\alpha_t\}_{0 \leq t \leq T}$ is  a progressively measurable nonnegative stochastic process with $\|\alpha\|_{\mathcal{S}^{\infty}}<\infty$. 

(4) ($A_{\gamma}$-condition) For all $t \in[0, T], L>0,\  y \in \mathbb{R},\   u_1, u_2 \in$ $\mathbb{L}^2\left(\mathcal{B}(E),\nu_t; \mathbb{R}\right),$ with $|y|,\ \|u_1\|_{\mathcal{J}^\infty},\ \left\|u_2\right\|_{\mathcal{J}^\infty} \leq L$, there exists a $\mathcal{P} \otimes \mathcal{E}$-measurable process $\gamma^{y, u_1, u_2}$ satisfying $d t \otimes d \mathbb{P}$-a.e.
$$
f(t, y, u_1,\mu)-f\left(t, y, u_2,\mu\right) \leq \int_E \gamma_t^{y, u_1,u_2}(x)\left[u_1(x)-u_2(x)\right] \nu(d x)
$$
and $C_L^1(1 \wedge|x|) \leq \gamma_t^{y,  u_1, u_2}(x) \leq C_L^2(1 \wedge|x|)$ with two constants $C_L^1, C_L^2$. Here, $C_L^1>-1$ and $C_L^2>0$ depend on $L$. (Hereafter, we frequently omit the superscripts to lighten the notation.) ($\mathcal{P}$ denotes the $\sigma$ algebra of progressively measurable sets of $[0, T] \times$ $\Omega$, respectively)
\hspace*{\fill}\\

$\left(H_{l}\right)$ The running loss function $l: \Omega \times[0, T] \times \mathbb{R} \rightarrow \mathbb{R}$ is an $\mathscr{F}_T \times \mathcal{B}(\mathbb{R}) \times \mathcal{B}(\mathbb{R})$ measurable map and there exists some constant $C>0$ such that, $\mathbb{P}$-a.s.,

1. $(t, y) \rightarrow l(t, y)$ is continuous,

2. $\forall t \in[0, T], y \rightarrow l(t, y)$ is strictly increasing,

3. $\forall t \in[0, T], \mathbb{E}[l(t, \infty)]>0$,

4. $\forall t \in[0, T], \forall y \in \mathbb{R},|l(t, y)| \leq C(1+|y|)$.
\hspace*{\fill}\\

In order to introduce another assumption for the main result of this paper, we define the operator $L_t: L^{2, \beta}(s) \rightarrow[0, \infty)$, $t \in[0, T]$ by
$$
L_t: X \rightarrow \inf \{x \geq 0: \mathbb{E}[l(t, x+X)] \geq 0\}
$$
which is well-defined due to Assumption $\left(H_{l}\right)$, see  \cite{briand2018bsdes}. Since $L_t(0)$ is continuous in $t$, without loss of generality we assume that $\left|L_t(0)\right| \leq L$ for each $t \in[0, T]$, see also \cite{briand2018bsdes}.
\hspace*{\fill}\\

$\left(H^{'}_{l}\right)$ There exist two constants $\overline\kappa>\underline\kappa>0$ such that for $\forall y_1, y_2 \in \mathbb{R}$,
$$
\underline{\kappa}\left|y_1-y_2\right| \leq\left|l\left(t, y_1\right)-l\left(t, y_2\right)\right| \leq \overline\kappa\left|y_1-y_2\right| .
$$
\hspace*{\fill}\\

$\left(H^{''}_{l}\right)$ There exist a constant $r>0$ such that for $\forall s, t \in [0, T]$,
$$
\left|l\left(t, y\right)-l\left(s, y\right)\right| \leq r\left|t-s\right|.
$$

\begin{remark}
\label{rmk L}
If assumption $\left(H^{'}_{l}\right)$ is also fulfilled, then for each $t \in[0, T]$, $\kappa:=\overline\kappa/\underline\kappa>1$,
\begin{equation}
\label{eqL}
\left|L_t\left(\eta^1\right)-L_t\left(\eta^2\right)\right| \leq \kappa \mathbb{E}\left[\left|\eta^1-\eta^2\right|\right], \quad \forall \eta^1, \eta^2 \in L^{2, \beta}(s).
\end{equation}
Besides, if Assumption $\left(H^{''}_{l}\right)$ also holds, we can deduce that
\begin{equation}
\label{eqLL}
\left|L_t\left(\eta^1\right)-L_s\left(\eta^2\right)\right| \leq \kappa \mathbb{E}\left[\left|\eta^1-\eta^2\right|\right] + \bar{r} |t-s|, \quad \forall t, s \in [0,T], \quad \forall \eta^1, \eta^2 \in L^{2, \beta}(s),
\end{equation}
where $\bar{r} = r/\underline\kappa$.

Indeed, 
$$
\begin{aligned}
\left|L_t\left(\eta^1\right)-L_s\left(\eta^2\right)\right| &\leq \left| L_t(\eta^1)-L_t(\eta^2)\right| + \left| L_t(\eta^2)-L_s(\eta^2)\right| \\
&\leq \kappa \mathbb{E}\left[\left|\eta^1-\eta^2\right|\right] + \left| L_t(\eta^2)-L_s(\eta^2)\right|.
\end{aligned}
$$
By the definition of function $L$, we know that $\left| L_t(\eta^2)-L_s(\eta^2)\right|$ means: the random variable $\eta^2$ in $L$ is the same, as the time changes, $\mathbb{E}\left[l(t, \eta^2) \right]$ changes as well and the change is $\left|\mathbb{E}\left[l\left(t, \eta^2\right)-l\left(s, \eta^2\right) \right]\right|$. If we keep the time consistent, then how many transformations do we need on the random variable to make up for this change. 
From Assumption $\left(H^{''}_{l}\right)$, we know that
$$
\left|\mathbb{E}\left[l\left(t, \eta^2\right)-l\left(s, \eta^2\right) \right]\right| \leq r\left|t-s\right|.
$$
Thus, assume $L_t(\eta^2)=y_1, L_s(\eta^2)=y_2$, we have
$$
\underline\kappa \left|y_1-y_2\right| \leq \left|\mathbb{E}\left[l\left(t, y_1+\eta^2\right)-l\left(t, y_2+\eta^2\right) \right]\right| \leq r |t-s|.
$$
Hence, we can deduce that $\left|y_1-y_2\right| \leq r/\underline\kappa |t-s|$, then we have
$$
\left|L_t\left(\eta^1\right)-L_s\left(\eta^2\right)\right| \leq \kappa \mathbb{E}\left[\left|\eta^1-\eta^2\right|\right] + \bar{r} |t-s|.
$$
\end{remark}

\subsection{Well-posedness}
We first recall the previous conclusions about the well-posedness of mean reflected BSDEJ (\ref{eq00}). \cite{gu2023mean} gives the existence and uniqueness of the solution of this equation in the space $L^{2, \beta}(s) \times L^{2, \beta}(p) \times \mathcal{A}_D$ under certain conditions. The purpose of this subsection is to prove that (\ref{eq00}) has a unique solution $(Y,U,K)$ in the space $\mathcal{S}^\infty \times \mathcal{J}^\infty \times \mathcal{A}_D^\infty$. This result is not obvious. Just like the counterexample given in Remark 2.7 of \cite{barles1997backward}, under the assumptions given in this paper, the comparison theorem for BSDEJ and Girsanov transformation in this setting are not valid, so it is difficult for us to directly deduce the bounded space of the solution according to the bounded condition of the terminal and generator.

\begin{prop}
\label{prop standard}
Assume that:

(i)The map $(\omega, t) \mapsto f(\omega, t, \cdot)$ is $\mathbb{F}$-progressively measurable. There exist a positive $\mathbb{F}$-progressively measurable process $\left(\alpha_t, t \in[0, T]\right)$ such that $-\frac{1}{\lambda} j_{\lambda}(t,- u)-\alpha_t\leq f(t,u) \leq \alpha_t+\frac{1}{\lambda} j_{\lambda}(t,u)$ $d t \otimes d \mathbb{P}$-a.e. $(\omega, t) \in \Omega \times[0, T]$.

(ii) $|\xi|,\left(\alpha_t, t \in[0, T]\right)$ are essentially bounded, i.e., $\|\xi\|_{\infty},\|\alpha\|_{\mathcal{S}^{\infty}}<\infty$.

(iii) There exists $\lambda\geq 0$, such that for every $\omega \in \Omega,\ t \in[0, T]$,\ $u_1,u_2\in L^2(\mathcal{B}(E),\nu_t, \mathbb{R})$, we have
$$
\begin{aligned}
& \left|f(\omega, t, u_1)-f\left(\omega, t, u_2\right)\right| \leq \lambda|u_1-u_2|_\nu.
\end{aligned}
$$

(iv) For all $t \in[0, T], M>0,\   u_1, u_2 \in$ $\mathbb{L}^2\left(\mathcal{B}(E),\nu_t; \mathbb{R}\right),$ with $\|u_1\|_{\mathcal{J}^\infty},\ \left\|u_2\right\|_{\mathcal{J}^\infty} \leq M$, there exists a $\mathcal{P} \otimes \mathcal{E}$-measurable process $\gamma^{u_1, u_2}$ satisfying $d t \otimes d \mathbb{P}$-a.e.
$$
f(t, u_1,\mu)-f\left(t, u_2,\mu\right) \leq \int_E \gamma_t^{u_1,u_2}(x)\left[u_1(x)-u_2(x)\right] \nu(d x)
$$
and $C_M^1(1 \wedge|x|) \leq \gamma_t^{u_1, u_2}(x) \leq C_M^2(1 \wedge|x|)$ with two constants $C_M^1, C_M^2$. Here, $C_M^1>-1$ and $C_M^2>0$ depend on $M$.

Then BSDE
\begin{equation}
\label{eq standard}
\bar{Y}_t= \xi + \int_t^T f(s,\bar{U}_s) ds -\int_t^T \int_E \bar{U}_s(e) \tilde{\mu}(ds,de)
\end{equation}
has a unique solution $(\bar{Y},\bar{U})$ in the space $\mathcal{S}^{\infty} \times \mathcal{J}^{\infty}$.
\end{prop}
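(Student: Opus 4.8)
The plan is to split the statement into two independent tasks: first construct a solution in a large weighted $L^2$ space where the Lipschitz hypothesis (iii) makes a contraction argument routine, and then upgrade the integrability of that solution from $L^2$ to $\mathcal{S}^\infty\times\mathcal{J}^\infty$ by a priori estimates based on (i) and (iv). Uniqueness in the bounded spaces will be inherited from uniqueness in $L^2$.

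For the construction, given a frozen input $u\in L^{2,\beta}(p)$, the terminal datum $\xi+\int_0^Tf(s,u_s)\,ds$ is square integrable by (ii) and (iii), so its conditional expectation $M_t=\mathbb{E}\big[\xi+\int_0^Tf(s,u_s)\,ds\mid\mathcal{F}_t\big]$ is a square-integrable martingale; the representation property of the filtration generated by $\tilde\mu$ yields $M_t=M_0+\int_0^t\int_E U_s(e)\,\tilde\mu(ds,de)$, and $Y_t:=M_t-\int_0^tf(s,u_s)\,ds$ solves \eqref{eq standard} with input $u$. This defines a map $u\mapsto U$ on $L^{2,\beta}(p)$; using the Lipschitz bound (iii) and the $\beta$-weight, a standard It\^o computation shows it is a contraction once $\beta$ is large enough, giving a unique solution $(\bar Y,\bar U)\in L^{2,\beta}(s)\times L^{2,\beta}(p)$. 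This step is essentially classical and could also be quoted from the well-posedness recalled above.

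The heart of the argument is the bound $\bar Y\in\mathcal{S}^\infty$, and this is exactly where the difficulty flagged before the statement appears: since neither the comparison principle nor a Girsanov change of measure is available here, one cannot read $\|\bar Y\|_{\mathcal{S}^\infty}$ off the bounds on $\xi$ and $\alpha$. My plan is to linearize with the kernel furnished by (iv): after truncating the $u$-argument at a level $M$ so that the constants $C_M^1>-1$, $C_M^2>0$ become uniform, the increment $f(s,\bar U_s)-f(s,0)$ is controlled by $\int_E\gamma_s(e)\,\bar U_s(e)\,\nu(de)$ with $\gamma>-1$. Rewriting \eqref{eq standard} as a linear BSDEJ with this $\gamma$ and the bounded source $f(s,0)$, the associated Dol\'eans--Dade exponential $\mathcal{E}\big(\int\gamma\,\tilde\mu\big)$ is strictly positive and, being bounded and bounded away from $-1$ after truncation, a true martingale; applying It\^o's product rule to it times $\bar Y$ and taking conditional expectation represents $\bar Y_t$ against this weight \emph{under the original measure}, hence bounds $|\bar Y_t|$ by $\|\xi\|_\infty+\|\alpha\|_{\mathcal{S}^\infty}T$ up to a constant depending only on $C_M^1,C_M^2,T$. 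Since the resulting bound does not depend on $M$, choosing $M$ above it makes the truncation inactive and gives $\bar Y\in\mathcal{S}^\infty$. I expect this paragraph to carry the whole weight of the proof: the delicate points are that the one-sided kernel of (iv) must linearize the equation in the right direction (the upper and lower bounds using the two sign conventions, with the residual source having a definite sign), and that the exponential is a genuine --- not merely local --- martingale, which is precisely where the jump setting differs from the Brownian one. As an alternative route robust to the loss of comparison, the same $\mathcal{S}^\infty$ bound can be produced directly from the growth condition (i) by applying It\^o to $e^{\pm\lambda\bar Y_t}$, where the $j_\lambda$-growth is compensated by the jump term so that $e^{\pm\lambda\bar Y}$ is a submartingale up to the bounded $\alpha$-drift.

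The bound $\bar U\in\mathcal{J}^\infty$ then follows for free from $\bar Y\in\mathcal{S}^\infty$: reading the dynamics of $\bar Y$ shows that at a jump time $t$ carrying mark $e$ one has $\Delta\bar Y_t=\bar U_t(e)$, so $\|\bar U\|_{\mathcal{J}^\infty}\le 2\|\bar Y\|_{\mathcal{S}^\infty}$ for $\nu$-a.e.\ $e$, using that $\mu$ is a Poisson random measure. Finally, any solution lying in $\mathcal{S}^\infty\times\mathcal{J}^\infty$ is a fortiori an $L^{2,\beta}$ solution, so uniqueness in the bounded spaces is inherited from the $L^{2,\beta}$ uniqueness already established, completing the proof.
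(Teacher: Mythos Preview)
The paper's own proof consists of a single sentence deferring to Fujii's quadratic--exponential BSDEJ paper; no argument is given. Your proposal reconstructs what is essentially the content of that reference: an $L^{2,\beta}$ fixed point under the Lipschitz hypothesis (iii), then an a priori $\mathcal{S}^\infty$ bound from the structural conditions (i) or (iv), then $\bar U\in\mathcal{J}^\infty$ via the jump identity $\Delta\bar Y_t=\bar U_t(e)$ (the paper itself uses this last step later, attributed to Morlais). The overall scheme is correct and matches the route in the cited literature.

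One technical point on your $\mathcal{S}^\infty$ step. The linearization via (iv) has a mild circularity you glossed over: when you first invoke (iv) you only know $\bar U\in L^{2,\beta}(p)$, not $\|\bar U\|_{\mathcal{J}^\infty}\le M$, so the truncation must be applied to the \emph{generator} (solve the BSDE with driver $f(s,T_M u)$), after which the $A_\gamma$ kernel is uniformly bounded, the Dol\'eans--Dade exponential is a genuine martingale, and the representation gives $\|\bar Y^M\|_{\mathcal{S}^\infty}\le\|\xi\|_\infty+T\|\alpha\|_{\mathcal{S}^\infty}$ independently of $M$, closing the bootstrap. Your alternative route---It\^o applied to $e^{\pm\lambda\bar Y}$ so that the $j_\lambda$ growth in (i) exactly cancels the drift coming from the jump compensator---is the cleaner one and is precisely the device used in the quadratic--exponential references the paper cites; it avoids the truncation altogether. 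Incidentally, the $A_\gamma$-based change of measure $\mathbb{E}^{\tilde\mu^\gamma}$ you describe is the very tool the paper itself uses in the proof of Theorem~\ref{thm well-posedness}, so your linearization is in line with the authors' own methods notwithstanding the remark before the proposition that Girsanov is unavailable.
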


\begin{proof}
The well-posedness can be deduced from \cite{fujii2018quadratic}.
\end{proof}

We first consider a simple case where the generator has the same structure as (\ref{eq standard}):
\begin{equation}
\label{eq fixed}
\left\{\begin{array}{l}
\tilde{Y}_t=\xi+\int_t^T f(s, \tilde{U}_s) d s-\int_t^T\int_E \tilde{U}_s(e) \tilde{\mu}(ds,de)+\tilde{K}_T-\tilde{K}_t , \quad \forall t \in[0, T] , \quad \mathbb{P} \text {-a.s. };\\
\mathbb{E}\left[l\left(t,\tilde{Y}_t\right)\right] \geq 0, \quad \forall t \in [0,T].
\end{array}\right. 
\end{equation}

\begin{thm}
\label{thm fixed mean reflected}
Assume that the same conditions as Proposition \ref{prop standard} hold. Then the BSDE (\ref{eq fixed}) with mean reflection has a unique deterministic flat solution $(\tilde{Y}, \tilde{U}, \tilde{K})$ in the space $ \mathcal{S}^{\infty} \times \mathcal{J}^\infty \times \mathcal{A}_D^\infty$.  
\end{thm}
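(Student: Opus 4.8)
The plan is to reduce the mean reflected problem to the unreflected BSDE \eqref{eq standard} of Proposition \ref{prop standard}, exploiting crucially that the generator in \eqref{eq fixed} does not depend on $\tilde Y$. First I would invoke Proposition \ref{prop standard} to obtain the unique solution $(\bar Y,\bar U)\in\mathcal S^\infty\times\mathcal J^\infty$ of \eqref{eq standard}. The key observation is that, since $f$ depends on $s$ and $\tilde U_s$ only, for \emph{any} deterministic càdlàg nondecreasing process $K$ the pair $\big(\bar Y_\cdot+(K_T-K_\cdot),\,\bar U\big)$ already solves the backward dynamics in \eqref{eq fixed}: adding the deterministic increment $K_T-K_t$ to $\bar Y_t$ leaves both the generator and the jump integral unchanged. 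Hence the whole problem collapses to choosing a single deterministic $K\in\mathcal A_D^\infty$ so that the constraint $\mathbb E[l(t,\tilde Y_t)]\ge 0$ holds for all $t$, together with the flatness (Skorokhod) condition $\int_0^T\mathbb E[l(t,\tilde Y_t)]\,dK_t=0$.

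Second, I would build $K$ explicitly through the operator $L_t$. Set $g(t):=L_t(\bar Y_t)$, a deterministic number for each $t$, and note that $\mathbb E[l(t,\bar Y_t+a)]\ge 0$ is equivalent to $a\ge g(t)$. The reflection seen at time $t$ is $K_T-K_t$, which must therefore dominate $g(t)$; since $t\mapsto K_T-K_t$ is nonincreasing and must vanish at $T$, the minimal admissible choice is the backward running supremum $A_t:=\sup_{s\in[t,T]}g(s)$, with $K_t:=A_0-A_t$, $\tilde Y:=\bar Y+A$ and $\tilde U:=\bar U$. Here I must verify $K\in\mathcal A_D^\infty$ and $\tilde Y\in\mathcal S^\infty$. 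Boundedness of $g$ follows from $|L_t(0)|\le L$ and the Lipschitz estimate \eqref{eqL} of Remark \ref{rmk L}, giving $g(t)\le L+\kappa\|\bar Y\|_{\mathcal S^\infty}$; the time regularity \eqref{eqLL}, together with the bounded càdlàg nature of $\bar Y$, shows $t\mapsto g(t)$ is càdlàg, while $g(T)=L_T(\xi)=0$ because $\mathbb E[l(T,\xi)]\ge 0$ by $(H_{\xi})$ and $\bar Y_{T^-}=\xi$ by quasi-left-continuity of the Poisson random measure. A short argument then shows the backward running supremum of a càdlàg function is again càdlàg and nonincreasing, so $K$ is deterministic, nondecreasing, null at the origin and bounded, while $\tilde Y=\bar Y+A$ is bounded.

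Third comes the verification that $(\tilde Y,\tilde U,\tilde K)$ is a deterministic flat solution. The backward equation in \eqref{eq fixed} holds by the decoupling of the first step; the constraint holds because $A_t\ge g(t)$ forces $\mathbb E[l(t,\tilde Y_t)]\ge 0$; and flatness holds because $A$ (hence $K$) moves only at times where $A_t=g(t)$, that is, exactly where $\mathbb E[l(t,\tilde Y_t)]=0$.

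Finally, uniqueness. Uniqueness of $\tilde U$ is immediate: any deterministic $K$ leaves the control component equal to $\bar U$, so the uniqueness statement of Proposition \ref{prop standard} pins down $\tilde U=\bar U$, and hence $\tilde Y=\bar Y+(K_T-K_\cdot)$, for every solution. It then remains to show that flatness forces $K_T-K_t=\sup_{s\in[t,T]}g(s)$: any admissible $K$ gives $A_t\ge g(t)$ with $A$ nonincreasing, hence $A\ge A^{*}$, and flatness (which confines the support of $dK$ to $\{A_t=g(t)\}$) must upgrade this to equality. I expect this Skorokhod-type step to be the main obstacle. The delicate point is the treatment of jumps of $K$: one must adopt the left-limit convention in the flatness condition (equivalently, rule out superfluous jumps) in order to exclude spurious solutions that rest above $A^{*}$ on a plateau and then drop onto it at a peak of $g$. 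Establishing the càdlàg regularity and uniform boundedness of the running-supremum process $K$ in the present jump framework—where, as noted before the statement, comparison and Girsanov arguments are not available—is the remaining technical point requiring care.
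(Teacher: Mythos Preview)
Your construction is sound and lands on exactly the same representation as the paper---$\tilde Y_t=\bar Y_t+\sup_{t\le s\le T}L_s(\bar Y_s)$, $\tilde U=\bar U$, $\tilde K_t=\sup_{0\le s\le T}L_s(\bar Y_s)-\sup_{t\le s\le T}L_s(\bar Y_s)$---but the route is genuinely different. The paper does not build the solution by hand: it first cites Theorem~5.3 of \cite{gu2023mean} to obtain existence, uniqueness, and this representation of the deterministic flat solution in the weaker space $L^{2,\beta}(s)\times L^{2,\beta}(p)\times\mathcal A_D$, and then simply observes that $\big(\tilde Y_\cdot-(\tilde K_T-\tilde K_\cdot),\tilde U\big)$ solves the unreflected BSDE \eqref{eq standard}; the uniqueness in Proposition~\ref{prop standard} identifies this pair with $(\bar Y,\bar U)\in\mathcal S^\infty\times\mathcal J^\infty$, whence $\tilde Y\in\mathcal S^\infty$ because $\tilde K$ is deterministic and bounded. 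Thus the paper treats well-posedness and flatness as already established and only upgrades the regularity, completely sidestepping the Skorokhod-type uniqueness argument you flag as the main obstacle. Your self-contained construction buys independence from the external reference at the cost of that extra work. Two remarks on your version: (i) your concern about jumps of $K$ dissolves once you note that $g(t)=L_t(\bar Y_t)$ depends on $\bar Y_t$ only through its \emph{law}, and the law of $\bar Y_t$ varies continuously in $t$ (the Poisson measure has no fixed-time jumps), so $g$---hence $K$---is continuous; (ii) you invoke \eqref{eqL} and \eqref{eqLL}, which require $(H'_l)$ and $(H''_l)$, neither of which is among the stated hypotheses of this theorem; the paper's bootstrap needs only $(H_l)$ (via \cite{gu2023mean}).
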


\begin{proof}
Obviously, the generator $f(s, \tilde{U}_s)$, the terminal $\xi$ and the loss function $l$ satisfies the conditions of Theorem 5.3 in \cite{gu2023mean}, thus BSDE (\ref{eq fixed}) admits a unique deterministic flat solution $(\tilde{Y}, \tilde{U}, \tilde{K}) \in L^{2, \beta}(s) \times L^{2, \beta}(p) \times \mathcal{A}_D$.

Moreover, for each $t \in[0, T]$ we have
$$
\tilde{K}_t=\sup _{0 \leq s \leq T} L_s\left(\bar{Y}_s\right)-\sup _{t \leq s \leq T} L_s\left(\bar{Y}_s\right) \text { and } \bar{Y}_t=\mathbb{E}_t\left[\xi+\int_t^T f(s,\bar{U}_s) d s\right] .
$$

Consequently, $\left(\tilde{Y}_t-\left(\tilde{K}_T-\tilde{K}_t\right), \tilde{U}_t\right)_{0 \leq t \leq T}$ and $\left(\bar{Y}_t, \bar{U}_t\right)_{0 \leq t \leq T}$ are both solutions to the following standard BSDE on the time interval $[0, T]$,
$$
\hat{Y}_t=\xi+\int_t^T f(s, \hat{U}_s) d s-\int_t^T \int_E \hat{U}_s(e) \tilde{\mu}(ds,de) .
$$

By the uniqueness of solutions to BSDE, we deduce that
$$
\left(\bar{Y}_t, \bar{U}_t\right)=\left(\tilde{Y}_t-\left(\tilde{K}_T-\tilde{K}_t\right), \tilde{U}_t\right), \forall t \in[0, T] .
$$

Since $\tilde{K}$ is a deterministic continuous process, we have $\tilde{Y} \in \mathcal{S}^{\infty}$. Thus $(\tilde{Y}, \tilde{U}, \tilde{K}) \in \mathcal{S}^{\infty} \times \mathcal{J}^\infty \times \mathcal{A}_D^\infty$ is the unique deterministic flat solution to the BSDE (\ref{eq fixed}) with mean reflection.
\end{proof}

\begin{remark}
\label{rmk rep}
In view of the proof of Theorem \ref{thm fixed mean reflected}, we can introduce a representation result which plays a key role in establishing the existence and uniqueness result. Indeed, we can prove this representation result in a way similar to that of Lemma 5.4 in \cite{gu2023mean}.  

Assume assumptions $\left(H_{\xi}\right)$, $\left(H_f\right)$, and $\left(H_l\right)$ hold. Suppose $(Y, U, K) \in \mathcal{S}^{\infty} \times \mathcal{J}^\infty \times \mathcal{A}_D^\infty$ is a deterministic flat solution to the BSDE with mean reflection (\ref{eq00}). Then, for each $t \in[0, T]$
$$
\left(Y_t, U_t, K_t \right)=\left(y_t+\sup _{t \leq s \leq T} L_s\left(y_s\right), u_t, \sup _{0 \leq s \leq T} L_s\left(y_s\right)-\sup _{t \leq s \leq T} L_s\left(y_s\right)\right),
$$
where $(y, u) \in \mathcal{S}^{\infty} \times J^\infty $ is the solution to the following BSDE with the driver $f\left(s, Y_s,u_s\right)$ on the time horizon $[0, T]$ , and $Y \in \mathcal{S}^{\infty}$ is fixed by the solution of (\ref{eq00}):
\begin{equation}
{\label{eq samll}}
y_t=\xi+\int_t^T f\left(s, Y_s,u_s\right) d s-\int_t^T \int_E u_s(e) \tilde{\mu}(d s, d e).
\end{equation}

\end{remark}

Next, we study the BSDE (\ref{eq00}) with mean reflection. We first prove the existence and uniqueness of the solution on a small time interval $[T-h, T]$, which is called local solution. Then we stitch local solutions to build the global solution.

\begin{thm}
\label{thm well-posedness}
Assume assumptions $\left(H_{\xi}\right)$, $\left(H_f\right)$,  $\left(H_l\right)$ and $\left(H^{'}_l\right)$ hold. Then the BSDE (\ref{eq00}) with mean reflection has a unique deterministic flat solution $(Y, U, K)$ in the space $\mathcal{S}^{\infty} \times \mathcal{J}^\infty \times \mathcal{A}_D^\infty$. 
\end{thm}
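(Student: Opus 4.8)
The plan is to construct the solution by a fixed point argument on a short horizon, where the $Y$-argument of the generator is frozen so that the problem reduces to the one already solved in Theorem \ref{thm fixed mean reflected}, and then to patch the short-horizon solutions together. Fix $h>0$, to be chosen, and work on $[T-h,T]$. For $Y\in\mathcal{S}^\infty([T-h,T])$ consider the frozen generator $\hat f(s,u):=f(s,Y_s,u)$: by $(H_f)$ it is Lipschitz in $u$ and satisfies the $A_\gamma$-condition, while its growth bound reads $-\frac1\lambda j_\lambda(s,-u)-\hat\alpha_s\le\hat f(s,u)\le\frac1\lambda j_\lambda(s,u)+\hat\alpha_s$ with $\hat\alpha_s:=\alpha_s+\beta|Y_s|\in\mathcal{S}^\infty$. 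Thus $\hat f$ meets the hypotheses of Proposition \ref{prop standard}, and Theorem \ref{thm fixed mean reflected} yields a unique deterministic flat solution $(\tilde Y,\tilde U,\tilde K)\in\mathcal{S}^\infty\times\mathcal{J}^\infty\times\mathcal{A}_D^\infty$ of the mean reflected BSDEJ driven by $\hat f$. I define the map $\Phi(Y):=\tilde Y$; it sends the Banach space $\mathcal{S}^\infty([T-h,T])$ into itself, and by the representation of Remark \ref{rmk rep} its fixed points are exactly the $Y$-components of the deterministic flat solutions of \eqref{eq00} on $[T-h,T]$.

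Next I would prove that $\Phi$ is a contraction for $h$ small. Given $Y^1,Y^2$, let $(y^i,u^i)$ solve the associated equation \eqref{eq samll}, so that $\tilde Y^i_t=y^i_t+\sup_{t\le s\le T}L_s(y^i_s)$. Writing $\delta y:=y^1-y^2$ and $\delta u:=u^1-u^2$, the pair $(\delta y,\delta u)$ solves a BSDEJ with null terminal value and driver $f(s,Y^1_s,u^1_s)-f(s,Y^2_s,u^2_s)$, which I split as $[f(s,Y^1_s,u^1_s)-f(s,Y^2_s,u^1_s)]+[f(s,Y^2_s,u^1_s)-f(s,Y^2_s,u^2_s)]$. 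The first bracket is bounded by $\lambda|Y^1_s-Y^2_s|$ through the Lipschitz condition $(H_f)(2)$; the second one is linearized through the kernel $\gamma$ of the $A_\gamma$-condition $(H_f)(4)$, which applies since $u^1,u^2\in\mathcal{J}^\infty$ are bounded. Because $C_M^1>-1$ forces $1+\gamma_s(e)>0$, the Doléans–Dade exponential generated by $\gamma$ is a positive martingale and defines an equivalent measure $\mathbb{Q}$ under which the $\delta u$-term is absorbed into the compensator, so that $\delta y_t=\mathbb{E}^{\mathbb{Q}}_t\big[\int_t^T\big(f(s,Y^1_s,u^1_s)-f(s,Y^2_s,u^1_s)\big)\,ds\big]$. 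Since $\mathbb{Q}\sim\mathbb{P}$ preserves essential suprema, this gives $\|\delta y\|_{\mathcal{S}^\infty}\le\lambda h\,\|Y^1-Y^2\|_{\mathcal{S}^\infty}$. Combining this with the Lipschitz estimate \eqref{eqL} for $L$, which yields $\big|\sup_{t\le s\le T}L_s(y^1_s)-\sup_{t\le s\le T}L_s(y^2_s)\big|\le\kappa\,\|y^1-y^2\|_{\mathcal{S}^\infty}$, I obtain $\|\Phi(Y^1)-\Phi(Y^2)\|_{\mathcal{S}^\infty}\le(1+\kappa)\lambda h\,\|Y^1-Y^2\|_{\mathcal{S}^\infty}$. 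Choosing $h$ so that $(1+\kappa)\lambda h<1$ makes $\Phi$ a contraction, and its unique fixed point together with the data from Remark \ref{rmk rep} provides the unique local deterministic flat solution in $\mathcal{S}^\infty\times\mathcal{J}^\infty\times\mathcal{A}_D^\infty$ on $[T-h,T]$.

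The global solution then follows by backward iteration. The decisive observation is that the threshold $h$ depends only on $\kappa$ and $\lambda$, not on the terminal datum, hence is uniform. Once the solution is built on $[T-h,T]$, the value $Y_{T-h}$ is bounded and $\mathcal{F}_{T-h}$-measurable, and since the constraint $\mathbb{E}[l(t,Y_t)]\ge0$ holds up to $t=T-h$ it satisfies $\mathbb{E}[l(T-h,Y_{T-h})]\ge0$, so that $(H_\xi)$ holds at $T-h$ and the local construction applies on $[T-2h,T-h]$ with terminal condition $Y_{T-h}$. After finitely many steps the interval $[0,T]$ is covered; concatenating the pieces of $y$ and $u$ yields a solution $(y,u)$ of \eqref{eq samll} on $[0,T]$, and setting $U:=u$ and $K_t:=\sup_{0\le s\le T}L_s(y_s)-\sup_{t\le s\le T}L_s(y_s)$ produces the global deterministic flat solution, with $K\in\mathcal{A}_D^\infty$ deterministic and non-decreasing and $Y\in\mathcal{S}^\infty$ because each piece is bounded and $K$ is deterministic and continuous. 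Uniqueness propagates from the uniqueness of the fixed point on each subinterval, combined with Remark \ref{rmk rep}, which forces any deterministic flat solution to coincide with the constructed one.

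I expect the main obstacle to lie in the sup-norm stability estimate for $\delta y$. In the Brownian setting of \cite{briand2021particles} one handles the $Z$-increment directly, whereas here the failure of a general comparison principle and the presence of the jump integral make the $U$-increment the delicate term; the way out is the structural $A_\gamma$-condition, and the points that must be checked with care are that the kernel $\gamma$ stays above $-1$ (so that $\mathbb{Q}$ is a genuine equivalent probability rather than a signed measure) and that the associated exponential is a true, not merely local, martingale under the standing integrability of $\nu$. A secondary, more routine difficulty is to verify that the patching preserves both the deterministic non-decreasing character of $K$ and the global Skorokhod (flatness) condition, which I would establish through the global representation of $K$ rather than by gluing the increments directly.
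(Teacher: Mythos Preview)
Your proposal is correct and follows essentially the same route as the paper: freeze the $Y$-argument, invoke Theorem \ref{thm fixed mean reflected} to define a solution map on $\mathcal{S}^\infty([T-h,T])$, linearize the $u$-increment via the $A_\gamma$-condition and a Girsanov-type change of measure to obtain an $\mathcal{S}^\infty$ contraction with constant $C\lambda h$, then use the Lipschitz estimate \eqref{eqL} for $L_t$ and patch backward. The only notable difference is that the paper inserts an explicit invariant-ball step (showing $\Gamma(\mathscr{B}_A)\subset\mathscr{B}_A$ for a specific $A_0$) before proving the contraction, whereas you argue directly on all of $\mathcal{S}^\infty$; since the contraction constant $(1+\kappa)\lambda h$ you obtain is independent of any a priori bound, your shortcut is legitimate, though you should note that the $A_\gamma$ linearization is a one-sided inequality and your displayed identity for $\delta y_t$ is really a pair of inequalities obtained by applying the condition twice with the roles of $u^1,u^2$ swapped.
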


\begin{proof}
\textbf{Step 1.} Define the solution map. Suppose $\left(H_{\xi}\right)$, $\left(H_f\right)$ and $\left(H_h\right)$ hold, we can deduced that for each $P \in \mathcal{S}_{[T-h, T]}^{\infty}$, it follows from Theorem \ref{thm fixed mean reflected} that the following BSDE with mean reflection
\begin{equation}
\label{eq fixed with mean}
\left\{\begin{array}{l}
Y_t^P=\xi+\int_t^T f\left(s, P_s,  U_s^P\right) d s-\int_t^T\int_E U_s^P(e) \tilde{\mu}(ds,de)+K_T^P-K_t^P , \quad \forall t \in[T-h, T], \quad \mathbb{P} \text {-a.s. };\\
\mathbb{E}\left[l\left(t, Y_t^P\right)\right] \geq 0, \quad \forall t \in [T-h, T].
\end{array}\right.
\end{equation}
has a unique deterministic flat solution $\left(Y^P, U^P, K^P\right) \in \mathcal{S}_{[T-h, T]}^{\infty} \times \mathcal{J}^{\infty}_{[T-h, T]} \times$ $\mathcal{A}_{[T-h, T]}^D$. Then we define the solution map $\Gamma: P \rightarrow \Gamma(P)$ by
$$
\Gamma(P):=Y^P, \quad \forall P \in \mathcal{S}_{[T-h, T]}^{\infty} .
$$

In order to show that $\Gamma$ is contractive, for each real number $A \geq A_0$ we consider the following set:
$$
\mathscr{B}_A:=\left\{P \in \mathcal{S}_{[T-h, T]}^{\infty}:\|P\|_{\mathcal{S}_{[T-h, T]}^{\infty}} \leq A\right\},
$$
where
$$
A_0=(3+2\kappa+4\lambda \kappa)L.
$$

\textbf{Step 2.} Prove that if $A \geq A_0$, then there is a constant $\delta^A>0$ depending only on $L, \lambda, \kappa$ and $A$ such that for any $h \in\left(0, \delta^A\right],\Gamma\left(\mathscr{B}_A\right) \subset \mathscr{B}_A$.

In view of Remark \ref{rmk rep}, we conclude that for each $t \in[T-h, T]$,
$$
\left(Y_t^P, U_t^P\right)=\left(y_t^P+\left(K_T^P-K_t^P\right), u_t^P\right),
$$
where $\left(y^P, u^P\right) \in \mathcal{S}_{[T-h, T]}^{\infty} \times \mathcal{J}^{\infty}_{[T-h, T]}$ is the solution to the following standard BSDE on the time interval $[T-h, T]$,
\begin{equation}
\label{eq fixed no mean}
y_t^P=\xi+\int_t^T f\left(s, P_s,u_s^P\right) d s-\int_t^T \int_E u_s^P(e) \tilde{\mu}(ds,de),
\end{equation}
and for each $t \in[T-h, T]$,
$$
K_T^P-K_t^P=\sup _{t \leq s \leq T} L_s\left(y_s^P\right),
$$
where
\begin{equation}
\label{bar Y}
\begin{aligned}
y_t^P&=\mathbb{E}_t\left[\xi+\int_t^T f\left(s, P_s,u_s^P\right) d s\right] \\
&= \mathbb{E}_t\left[\xi+\int_t^T f\left(s, P_s,u_s^P\right)-f\left(s, P_s,0\right)+f\left(s, P_s,0\right) d s\right]\\
&\leq \mathbb{E}_t^{\tilde{\mu}^\gamma}\left[\xi+\int_t^T |f\left(s, P_s,0\right)| d s\right] \leq L+(L+\lambda A)h,    
\end{aligned}
\end{equation}
where $\tilde{\mu}^\gamma := \tilde{\mu} -<\tilde{\mu}, \tilde{\gamma}\cdot \tilde{\mu}> $, $\tilde{\gamma} = \gamma^{u_s^P}$, and the first inequality is deduced from the $A_\gamma$ condition, the second inequality is deduced from the Lipschitz condition.

Consequently, we obtain that
\begin{equation}
\label{Y}
\left\|Y^P\right\|_{\mathcal{S}_{[T-h, T]}^{\infty}} \leq\left\|y^P\right\|_{\mathcal{S}_{[T-h, T]}^{\infty}}+\sup _{T-h \leq s \leq T} L_s\left(y_s^P\right) .
\end{equation}

Thanks to Remark \ref{rmk L}, for each $s \in[T-h, T]$ we have
$$
\left|L_s\left(y_s^P\right)-L_s(0)\right| \leq \kappa \mathbb{E}\left[\left|y_s^P\right|\right].
$$

Therefore from Assumption $\left(H_f\right)$ we deduce that
\begin{equation}
\label{Y1}
\begin{aligned}
\sup _{T-h \leq s \leq T} L_s\left(y_s^P\right) & \leq L+\kappa \sup _{T-h \leq s \leq T} \mathbb{E}\left[|\xi|+\int_s^T\left(\left|f\left(r, 0,0\right)\right|+\lambda A + \lambda \left(\int_E |u_r^P(e)|^2 \nu(de)\right)^{\frac{1}{2}}\right) d r\right] \\
& \leq(\kappa+1) L+\kappa h (L+\lambda A)+ 2\kappa \lambda \|y^P\|_{\mathcal{S}^\infty}\\
&\leq (\kappa+1) L+\kappa h (L+\lambda A)+ 2\kappa \lambda (L+(L+\lambda A)h),
\end{aligned}
\end{equation}
in view of Corollary 1 from \cite{morlais2009utility}, we can have the second inequality.

Then we define
$$
\delta^A:=\min \left(\frac{L}{L+\lambda A}, T\right).
$$

Recalling equations (\ref{bar Y}), (\ref{Y}) and (\ref{Y1}), we derive that for each $h \in\left(0, \delta^A\right]$,
$$
\left\|Y^P\right\|_{\mathcal{S}_{[T-h, T]}^{\infty}} \leq (3+2\kappa+4\lambda \kappa)L=A_0 \leq A,
$$
which is the desired result.

\textbf{Step 3.} Now we are going to show the contractive property of the solution map $\Gamma$.

For $\forall P^1, P^2 \in \mathcal{B}_A$, set
$$
Y^{P^i}=\Gamma\left(P^i\right), \quad i=1,2.
$$
where $\left(Y^{P^i}, U^{P^i}, K^{P^i}\right)$ is the solution to the BSDE (\ref{eq fixed with mean}) with mean reflection associated with the data $P^i$. Applying Theorem \ref{thm fixed mean reflected} again, we conclude that for each $t \in[T-h, T]$,
$$
\left(Y_t^{P^i}, U_t^{P^i}\right)=\left(y_t^{P^i}+\left(K_T^{P^i}-K_t^{P^i}\right), u_t^{P^i}\right),
$$
where $\left(y^{P^i}, u^{P^i}\right) \in \mathcal{S}_{[T-h, T]}^{\infty} \times \mathcal{J}^\infty_{[T-h, T]}$ is the solution to the BSDE (\ref{eq fixed no mean}) associated with the data $P^i$. Since for each $t \in[T-h, T]$,
$$
K_T^{P^i}-K_t^{P^i}=\sup _{t \leq s \leq T} L_s\left(y_s^{P^i}\right) \text { and } y_t^{P^i}=\mathbb{E}_t\left[\xi+\int_t^T f\left(s, P_s^i, u_s^{P^i}\right) d s\right],
$$
we have
$$
\begin{aligned}
\sup _{T-h \leq t \leq T}\left|\left(K_T^{P^1}-K_t^{P^1}\right)-\left(K_T^{P^2}-K_t^{P^2}\right)\right| &\leq  \sup _{T-h \leq s \leq T}\left|L_s\left(y_s^{P^1}\right)-L_s\left(y_s^{P^2}\right)\right| \\
&\leq \kappa \sup _{T-h \leq s \leq T} \mathbb{E}\left[ \left|y^{P^1}_s- y^{P^2}_s\right|\right]\\
&\leq \kappa \sup _{T-h \leq s \leq T} \mathbb{E}\left[ \int_s^T \left| f(r,P_r^1,u_r^{P^1})-f(r,P_r^1,u_r^{P^2})+f(r,P_r^1,u_r^{P^2})-f(r,P_r^2,u_r^{P^2})\right|dr\right]\\
&\leq \kappa \sup _{T-h \leq s \leq T} \mathbb{E}^{\tilde{\mu}^\gamma}\left[ \int_s^T \left|f(r,P_r^1,u_r^{P^2})-f(r,P_r^2,u_r^{P^2})\right|dr\right]\\
&\leq \kappa \lambda h \left\|P^1-P^2 \right\|_{\mathcal{S}^\infty_{[T-h,T]}},
\end{aligned}
$$
where $\tilde{\mu}^\gamma := \tilde{\mu} -<\tilde{\mu}, \tilde{\gamma}\cdot \tilde{\mu}> $, $\tilde{\gamma} = \gamma^{u_r^{P^1},u_r^{P^2}}$. 

Similarly,
$$
\begin{aligned}
\left|y^{P^1}_s- y^{P^2}_s\right| &\leq \mathbb{E}_t\left[ \int_s^T \left| f(r,P_r^1,u_r^{P^1})-f(r,P_r^1,u_r^{P^2})+f(r,P_r^1,u_r^{P^2})-f(r,P_r^2,u_r^{P^2})\right|dr\right]\\
&\leq \mathbb{E}_t^{\tilde{\mu}^\gamma}\left[ \int_s^T \left|f(r,P_r^1,u_r^{P^2})-f(r,P_r^2,u_r^{P^2})\right|dr\right].
\end{aligned}
$$
Hence we have
$$
\left\|y^{P^1}-y^{P^2}\right\|_{\mathcal{S}_{[T-h, T]}^{\infty}} \leq \kappa \lambda h \left\|P^1-P^2 \right\|_{\mathcal{S}^\infty_{[T-h,T]}}.
$$

Thus we can conclude that
$$
\begin{aligned}
\left\|Y^{P^1}-Y^{P^2}\right\|_{\mathcal{S}_{[T-h, T]}^{\infty}}
&\leq  \left\|y^{P^1}-y^{P^2}\right\|_{\mathcal{S}_{[T-h, T]}^{\infty}}+\sup _{T-h \leq t \leq T}\left|\left(K_T^{P^1}-K_t^{P^1}\right)-\left(K_T^{P^2}-K_t^{P^2}\right)\right| \\
&\leq \left\|y^{P^1}-y^{P^2}\right\|_{\mathcal{S}_{[T-h, T]}^{\infty}}+\kappa \lambda h \left\|P^1-P^2 \right\|_{\mathcal{S}^\infty_{[T-h,T]}}\\
&\leq 2\kappa\lambda h \left\|P^1-P^2 \right\|_{\mathcal{S}^\infty_{[T-h,T]}}.
\end{aligned}
$$

Now we define
$$
\hat{\delta}^A:=\min \left(\frac{1}{4 \lambda \kappa}, \delta^A\right),
$$
and it is straightforward to check that for any $h \in\left(0, \hat{\delta}^A\right]$,
$$
\left\|Y^{P^1}-Y^{P^2}\right\|_{\mathcal{S}_{[T-h, T]}^{\infty}} \leq \frac{1}{2}\left\|P^1-P^2\right\|_{\mathcal{S}_{[T-h, T]}^{\infty}}.
$$
Therefore, $\Gamma$ defines a strict contraction map on the time interval $[T-h,T]$, which implies the existence and uniqueness of local solution on $[T-h,T]$ due to the completeness of the space.

\textbf{Step 4.} Next we stitch the local solutions to get the global solution on $[0,T]$. More precisely, choose  $h \in\left(0, \hat{\delta}^A\right]$ and $T=n h$ for some integer $n$. Then  the BSDE (\ref{eq00}) with mean reflection admits a unique deterministic flat solution $\left(Y^n, U^n, K^n\right) \in \mathcal{S}^\infty_{[T-h, T]} \times \mathcal{J}^\infty_{[T-h, T]} \times \mathcal{A}_D^\infty$ on the time interval $[T-h, T]$. Next we take $T-h$ as the terminal time and $Y_{T-h}^n$ as the terminal condition. We can find the unique deterministic flat solution of the BSDE (\ref{eq00}) with mean reflection $\left(Y^{n-1}, U^{n-1}, K^{n-1}\right)$  on the time interval $[T-2 h, T-h]$. 
Repeating this procedure, we get a sequence $\left(Y^i, U^i, Z^i, K^i\right)_{i \leq n}$. 

We stitch the sequence as:
$$
Y_t=\sum_{i=1}^n Y_t^i I_{[(i-1) h, i h)}(t)+Y_T^n I_{\{T\}}(t), \ U_t=\sum_{i=1}^n U_t^i I_{[(i-1) h, i h)}(t)+U_T^n I_{\{T\}}(t),
$$
as well as
$$
K_t=K_t^i+\sum_{j=1}^{i-1} K_{j h}^j, \quad \text { for } t \in[(i-1) h, i h],  i \leq n.
$$
 It is  obvious that $(Y, U, K) \in \mathcal{S}^\infty \times \mathcal{J}^\infty \times \mathcal{A}_D^\infty$ is a deterministic flat solution to the BSDE (\ref{eq00}) with mean reflection.
The uniqueness of the global solution follows from the uniqueness of local solution on each small time interval. The proof is complete.
\end{proof}

\begin{remark}
If the BSDE is driven by a marked point process (MPP), and the form is as follows:
\begin{equation}
\label{eq MPP}
\left\{\begin{array}{l}
Y_t =\xi+\int_t^T f\left(s, Y_s, U_s\right) dA_s -\int_t^T \int_E U_s(e) q(d s, d e) + \left(K_T-K_t\right) , \quad \forall t \in[0, T],\quad \mathbb{P} \text {-a.s. };\\
\mathbb{E}\left[l\left(t, Y_t\right)\right] \geq 0, \quad \forall t \in[0, T].
\end{array}\right.
\end{equation}
Here $q$ is a compensated random measure corresponding to some marked point process. The process $A$ is the dual predictive projection of the event counting process related to the marked point process. We emphasize that under our assumptions, as in \cite{foresta2021optimal}, the process $A$ is continuous and increasing which is not necessarily absolutely continuous with respect to the Lebesgue measure. 

Assume process $A$ is continuous, $ \mathbb{E}\left[ e^{\beta A_T} \right] < \infty$, $\left(H_{\xi}\right)$, $\left(H_f\right)$,  $\left(H_l\right)$ and $\left(H^{'}_l\right)$ hold. Then the mean reflected BSDE driven by a marked point process (\ref{eq MPP})  has a unique deterministic flat solution $(Y, U, K)$ in the space $\mathcal{S}^{\infty} \times \mathcal{J}^\infty \times \mathcal{A}_D^\infty$. 

\end{remark}

\subsection{Regularity results on $K$ and $y$}

\begin{prop}
\label{prop regularity}
Let $p \geq 2$ and assume that the same conditions as Theorem \ref{thm well-posedness} and $\left(H^{''}_{l}\right)$ hold . Let $0 \leq t_1 \leq t_2 \leq T$ be such that $t_2 - t_1 \leq 1$. There exists a constant $C$ depending on $p, T, \kappa$, and $h$ such that the following hold:\\
(i) $\forall t_1 \leq s<t \leq t_2, \quad\left|K_t-K_s\right|^p \leq C|t-s|^{p / 2}$.\\
(ii) $\forall t_1 \leq s \leq t \leq t_2, \quad \mathbb{E}\left[\left|y_t-y_s\right|^p\right] \leq C|t-s|$, where $(y, u) \in \mathcal{S}^{\infty} \times J^\infty $ is the solution to the BSDE (\ref{eq samll}).\\
(iii) $\forall t_1 \leq r<s<t \leq t_2, \quad \mathbb{E}\left[\left|y_s-y_r\right|^p\left|y_t-y_s\right|^p\right] \leq C|t-r|^2$, where $(y, u) \in \mathcal{S}^{\infty} \times J^\infty $ is the solution to the BSDE (\ref{eq samll}).
\end{prop}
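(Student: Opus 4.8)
The plan is to establish the three estimates in the order (ii), (i), (iii), since (i) will draw on (ii) with $p=2$ and (iii) will draw on a conditional refinement of (ii). For (ii) I would start from the dynamics of $y$ in (\ref{eq samll}): for $t_1\le s\le t\le t_2$,
$$
y_t-y_s=-\int_s^t f(r,Y_r,u_r)\,dr+\int_s^t\int_E u_r(e)\,\tilde{\mu}(dr,de).
$$
The driver term is controlled by boundedness: since $Y\in\mathcal{S}^\infty$, $u\in\mathcal{J}^\infty$ and $\alpha\in\mathcal{S}^\infty$, the growth and Lipschitz parts of $(H_f)$ make $f(r,Y_r,u_r)$ essentially bounded by a constant $C_f$, so its contribution to $\mathbb{E}[|y_t-y_s|^p]$ is $O(|t-s|^p)$. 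For the purely discontinuous martingale part I would invoke the Burkholder--Davis--Gundy / Kunita inequality for $p\ge 2$, bounding it by $\mathbb{E}[(\int_s^t\int_E|u_r|^2\nu(de)\,dr)^{p/2}]+\mathbb{E}[\int_s^t\int_E|u_r|^p\nu(de)\,dr]$; using $u\in\mathcal{J}^\infty$ and the boundedness of $\int_E|u_r|^2\nu(de)$ these are $O(|t-s|^{p/2})$ and $O(|t-s|)$. Since $t-s\le t_2-t_1\le 1$ and $p\ge 2$, all of $|t-s|^p,|t-s|^{p/2}$ are dominated by $|t-s|$, yielding $\mathbb{E}[|y_t-y_s|^p]\le C|t-s|$.

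For (i) I would use the representation of Remark \ref{rmk rep}, by which $K_v=\sup_{0\le u\le T}L_u(y_u)-\sup_{v\le u\le T}L_u(y_u)$, so that for $s<t$ one has $0\le K_t-K_s=\sup_{s\le u\le T}L_u(y_u)-\sup_{t\le u\le T}L_u(y_u)$. Splitting the supremum over $[s,T]$ according to whether it is attained on $[t,T]$ (then the difference vanishes) or at some $u^\ast\in[s,t)$, and comparing $L_{u^\ast}(y_{u^\ast})$ with $L_t(y_t)\le\sup_{t\le u\le T}L_u(y_u)$, the estimate (\ref{eqLL}) of Remark \ref{rmk L} gives
$$
K_t-K_s\le\kappa\sup_{s\le u\le t}\mathbb{E}\big[|y_u-y_t|\big]+\bar r\,|t-s|.
$$
Applying (ii) with $p=2$ bounds $\mathbb{E}[|y_u-y_t|]$ by $C|t-s|^{1/2}$, so that $K_t-K_s\le C|t-s|^{1/2}$ for $|t-s|\le 1$, and raising to the power $p$ gives (i).

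For (iii) the naive Cauchy--Schwarz bound, combined with (ii) at exponent $2p$, only yields $C\sqrt{|s-r|\,|t-s|}\le C|t-r|$, so the point is to exploit adaptedness. Observe that $y_s-y_r$ depends only on the increments of the driver and of the compensated measure over $[r,s]$ and is therefore $\mathcal{F}_s$-measurable, whereas $y_t-y_s$ involves only $[s,t]$. Conditioning on $\mathcal{F}_s$ and using the tower property,
$$
\mathbb{E}\big[|y_s-y_r|^p|y_t-y_s|^p\big]=\mathbb{E}\big[|y_s-y_r|^p\,\mathbb{E}[|y_t-y_s|^p\mid\mathcal{F}_s]\big].
$$
I would then prove the conditional analogue of (ii), namely $\mathbb{E}[|y_t-y_s|^p\mid\mathcal{F}_s]\le C|t-s|$, by repeating the argument of (ii) with conditional expectations (conditional BDG/Kunita for the martingale part, boundedness for the driver). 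Combined with $\mathbb{E}[|y_s-y_r|^p]\le C|s-r|$ from (ii), this gives $\mathbb{E}[|y_s-y_r|^p|y_t-y_s|^p]\le C|t-s|\,|s-r|\le C|t-r|^2$.

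The conceptual crux is the $\mathcal{F}_s$-conditioning in (iii): without it one loses the quadratic power $|t-r|^2$ and recovers only $|t-r|$. The most delicate technical point throughout is the moment control of the jump martingale $\int u\,\tilde{\mu}$: one must apply the correct jump BDG/Kunita inequality and verify that, under the $\mathcal{J}^\infty$ bound and the $(A_\gamma)$ structure, the $\nu$-integrated quantities $\int_E|u_r|^2\nu(de)$ are genuinely bounded so that increments scale linearly in $|t-s|$. This is precisely where the jump setting departs from the Brownian case of Briand--Hibon and forces the weaker exponent in (ii), the bound $|t-s|$ rather than $|t-s|^{p/2}$.
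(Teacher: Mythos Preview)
Your proof is correct and follows the same overall strategy as the paper: bound the driver by $\mathcal{S}^\infty/\mathcal{J}^\infty$ control, use the jump BDG/Kunita inequality for the compensated-measure martingale, and for (iii) condition on $\mathcal{F}_s$ to factor the product. The order you choose, (ii)$\to$(i)$\to$(iii), differs from the paper's (i)$\to$(ii)$\to$(iii), but this is immaterial since the paper estimates $\mathbb{E}[|y_r-y_s|]$ inside its proof of (i) by the same argument you isolate as (ii).

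The one genuine difference is in how you reach the key inequality $K_t-K_s\le \kappa\sup_{s\le u\le t}\mathbb{E}[|y_u-y_t|]+\bar r\,|t-s|$. You obtain it directly from the representation $K_v=\sup_{0\le u\le T}L_u(y_u)-\sup_{v\le u\le T}L_u(y_u)$ by splitting the supremum over $[s,T]$ and comparing $L_{u^\ast}(y_{u^\ast})$ with $L_t(y_t)$ via (\ref{eqLL}). The paper instead first derives an intermediate identity $K_s-K_t=\sup_{t\le r\le s}L_r(Y_s+y_r-y_s)$ through a somewhat longer manipulation using $\bar L_r(Y+y)=\bar L_r(Y)-y$ and $(\cdot)^+$, and only then invokes $L_s(Y_s)=0$ together with (\ref{eqLL}). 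Your route is more elementary and avoids that intermediate formula; the paper's route, on the other hand, yields an exact expression for the increment of $K$ in terms of $(Y_s,y_r-y_s)$, which could be of independent use. Both lead to the same $|t-s|^{1/2}$ H\"older bound and hence (i).
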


\begin{proof}
(i). Let us recall that, for any process $y$,
$$
\begin{gathered}
\bar{L}_t(y)=\inf \{x \in \mathbb{R}: \mathbb{E}[l(t, x+X)] \geq 0\}, \\
L_t(y)=\left(\bar{L}_t(y)\right)^{+}=\inf \{x \geq 0: \mathbb{E}[l(t, x+X)] \geq 0\} .
\end{gathered}
$$
Actually, we have
$$
K_s-K_t=\sup _{t \leq r \leq s} L_r\left(Y_s+y_r-y_s\right) .
$$
Indeed, from the representation of the process $K$, we have
$$
\begin{aligned}
K_T-K_t & =\sup _{t \leq r \leq T} L_r\left(y_r\right)=\max \left\{\sup _{s \leq r \leq T} L_r\left(y_r\right), \sup _{t \leq r \leq s} L_r\left(y_r\right)\right\} \\
& =\max \left\{K_T-K_s, \sup _{t \leq r \leq s} L_r\left(y_r\right)\right\} \\
& =\max \left\{K_T-K_s, \sup _{t \leq r \leq s} L_r\left(Y_s-(K_T-K_s)+y_r-y_s\right)\right\} \\
& =\max \left\{K_T-K_s, \sup _{t \leq r \leq s}\left[\bar{L}_r\left(Y_s-(K_T-K_s)+y_r-y_s\right)^{+}\right]\right\} .
\end{aligned}
$$
By the definition of $\bar{L}$, we observe that for all $y \in \mathbb{R}, \bar{L}_t(Y+y)=\bar{L}_t(Y)-y$, so we have
$$
\begin{aligned}
K_T-K_t & =\max \left\{K_T-K_s, \sup _{t \leq r \leq s}\left[\left(K_T-K_s+\bar{L}_r\left(Y_s+y_r-y_s\right)\right)^{+}\right]\right\} \\
& =K_T-K_s+\max \left\{0, \sup _{t \leq r \leq s}\left[\left(K_T-K_s+\bar{L}_r\left(Y_s+y_r-y_s\right)\right)^{+}-(K_T-K_s)\right]\right\} .
\end{aligned}
$$
Note that $\sup _r\left(f(r)^{+}\right)=\left(\sup _r f(r)\right)^{+}=\max \left(0, \sup _r f(r)\right)$ for any function $f$, and obviously
$$
\begin{aligned}
K_T-K_t & =K_T-K_s+\sup _{t \leq r \leq s}\left[\left\{\left(K_T-K_s+\bar{L}_r\left(Y_s+y_r-y_s\right)\right)^{+}-(K_T-K_s)\right\}^{+}\right] \\
& =K_T-K_s+\sup _{t \leq r \leq s}\left[\left(\bar{L}_r\left(Y_s+y_r-y_s\right)\right)^{+}\right] \\
& =K_T-K_s+\sup _{t \leq r \leq s} L_r\left(Y_s+y_r-y_s\right),
\end{aligned}
$$
so
$$
K_s-K_t=\sup _{t \leq r \leq s} L_r\left(Y_s+y_r-y_s\right) .
$$

Hence, from the previous representation of $K_s-K_t$, we deduce the $\frac{1}{2}$-Hölder property of the function $t \longmapsto K_t$. Indeed, since by definition $L_s\left(Y_s\right)=0$, if $t<s$, by using Remark \ref{rmk L}, we have
$$
\begin{aligned}
\left|K_t-K_s\right| & =\sup _{t \leq r \leq s} L_r\left(Y_s+y_r-y_s\right) \\
& =\sup _{t \leq r \leq s}\left[L_r\left(Y_s+y_r-y_s\right)-L_s\left(Y_s\right)\right] \\
& =\kappa \sup _{t \leq r \leq s} \mathbb{E}\left[\left|y_r-y_s\right|\right] + \bar{r} |t-s|,
\end{aligned}
$$
and so
$$
\begin{aligned}
\left|K_t-K_s\right| \leq & C\left\{\mathbb{E}\left[\sup _{t \leq r \leq s}\left|\int_t^r f\left(u, Y_u, u_u\right) d u\right|\right]+\left(\mathbb{E}\left[\sup _{t \leq r \leq s}\left|\int_t^r \int_E u_u(e) \tilde{\mu}(d u, d e)\right|^2\right]\right)^{\frac{1}{2}}\right\} + \bar{r} |t-s|\\
\leq & C\left\{\int_t^s \mathbb{E}\left[\left|f\left(u, 0,0\right)\right|+|Y_u|+\left(\int_E|u_u(e)|^2\nu(de)\right)^{\frac{1}{2}}\right] d u+\left(\mathbb{E}\left[\int_t^s \int_E\left|u_u(e)\right|^2 \nu(d e) d u\right]\right)^{1 / 2}\right\} + \bar{r} |t-s|\\
\leq & C\left\{|t-s| \mathbb{E}\left[\sup _{0\leq u \leq T}|f(u,0,0)|+\sup _{0\leq u \leq T}\left|Y_u\right|\right]+|t-s|^{1 / 2}\left(\mathbb{E}\left[ \|u_s(e)\|_{\mathcal{J}^\infty} \int_E \nu(de) \right]\right)^{1 / 2}\right\} + \bar{r} |t-s|.
\end{aligned}
$$

Therefore, as $\xi$ is bounded and $\|\alpha\|_{\mathcal{S}^{\infty}}<\infty$, it follows from Corollary 3.4 in \cite{gu2024exponential} and Corollary 1 in \cite{morlais2009utility} that
$$
\left|K_t-K_s\right| \leq C|t-s|^{1 / 2} .
$$

When $s<t$, we have the same conclusion.

(ii).
$$
\begin{aligned}
\mathbb{E}\left[\left|y_t-y_s\right|^p\right] &\leq  2^{p-1} \mathbb{E}\left[\left(\int_s^t\left|f\left(u, Y_u,u_u\right)\right| d u\right)^p+\left|\int_s^t \int_E u_u(e) \tilde{\mu}(d u, d e)\right|^p\right] \\
&\leq  C \sup _{0 \leq r \leq t} \mathbb{E}\left[\left(\int_s^r\left|f\left(u,0,0\right)\right|+|Y_u|+\left(\int_E|u_u(e)|^2\nu(de)\right)^{\frac{1}{2}} d u\right)^p+\left|\int_s^r \int_E u_u(e) \tilde{\mu}(d u, d e)\right|^p\right] \\
&\leq C\left\{|t-s|^{p-1} \mathbb{E}\left[\int_s^t\left(|f(u,0,0)|+\left|Y_{u}\right|\right)^p d u\right]+|t-s|^{p / 2}\mathbb{E}\left[\left(\int_s^t\int_E |u_u(e)|^2 \nu(de)d u\right)^{p / 2}\right] \right.
\\& \quad \left.+\mathbb{E}\left[\left(\int_s^t\int_E |u_u(e)|^2 \nu(de)d u\right)^{p / 2}\right] 
+\mathbb{E}\left[\int_s^t \int_E|u_u(e)|^p \nu(de)d u\right]\right\} \\
&\leq C_1 |t-s|^p\mathbb{E}\left[\sup _{0 \leq t \leq T}\left|Y_t\right|^p\right]+C_2 |t-s|^{p-1}\mathbb{E}\left[\int_s^t |f(u,0,0)|^p\right]\\
&\quad +C_3 |t-s|^{p / 2} \mathbb{E}\left[\|u_s(e)\|^p_{\mathcal{J}^\infty}\left(\int_E \nu(de)\right)^{p / 2}\right]+C_4 |t-s| \mathbb{E}\left[\|u_s(e)\|^p_{\mathcal{J}^\infty} \int_E \nu(de) \right] .
\end{aligned}
$$
The third inequality holds because of H\"older's and Burkholder-Davis-Gundy inequality from \cite{kuhn2023maximal}.

Finally, we conclude that there exists a constant $C$, such that
$$
\forall 0 \leq s \leq t \leq T, \quad \mathbb{E}\left[\left|y_t-y_s\right|^p\right] \leq C|t-s| .
$$

(iii). Let $0 \leq r<s<t \leq T$. From the proof of (ii) and Burkholder-Davis-Gundy inequality, we have
$$
\begin{aligned}
& \mathbb{E}\left[\left|y_s-y_r\right|^p\left|y_t-y_s\right|^p\right] \\
& \leq C \mathbb{E}\left[| y_{ s } - y_{ r } | ^ { p } \left\{\mathbb{E}_s\left[\left|\int_s^t f\left(u,Y_u,u_u\right) d s\right|^p\right]+ \mathbb{E}_s\left[ \left| \int_s^t\int_E u_u(e) \tilde{\mu}(du,de) \right|^p \right]  \right\}\right] \\
& \leq C \mathbb{E}\left[\left|y_s-y_r\right|^p\left\{|t-s|\mathbb{E}_s\left[\|u_s(e)\|^p_\infty \int_E \nu(de)\right]\right\}\right] \\
&\leq C|t-s|\mathbb{E}\left[\left|y_s-y_r\right|^p\right]\\
&\leq C|t-s| |s-r|\\
&\leq C|t-r|^2.
\end{aligned}
$$
\end{proof}

\section{Propagation of chaos}
\label{sec 4}
In this section, we study the interacting particle system:
\begin{equation}
\label{eq particle}
\begin{cases}Y_t^i=\xi^i+\int_t^T f^i(u,Y_u^i,U_u^{i,i}) d u-\int_t^T \int_E \sum_{j=1}^N U_u^{i, j}(e) \tilde{\mu}^j(du,de)+K_T^{(N)}-K_t^{(N)}, & \forall 1 \leq i \leq N, \\ \frac{1}{N} \sum_{i=1}^N l\left(t, Y_t^i\right) \geq 0, & 0 \leq t \leq T,\end{cases}
\end{equation}
where ${\xi^i}_{1 \leq i \leq N}$, ${f^i}_{1 \leq i \leq N}$ and ${\tilde{\mu}^i}_{1 \leq i \leq N}$ are independent copies of $\xi, f$ and $\tilde{\mu}$. The augmented natural filtration of the family of $\{\tilde{\mu}^i\}_{1\leq i \leq N}$ is denoted by $\mathscr{F}_t^{(N)}$. Denote by $\mathbb{F}^N:=\left\{\mathcal{F}_t^N\right\}_{t \in[0, T]}$ the completion of the filtration generated by $\left\{\tilde{\mu}^i\right\}_{1 \leq i \leq N}$. Let $\mathcal{T}_t^N$ be the set of $\mathbb{F}^N$ stopping times with values in $[t, T]$. 

This equation is a multidimensional reflected BSDE. A solution to this equation is a process $\left(Y^{(N)}, U^{(N)}, K^{(N)}\right)$ with values in $\left(\mathbb{R}^N, \mathbb{R}^{N \times N}, \mathbb{R}\right), K^{(N)}$ being a nondecreasing random process with $K_0^{(N)}=0$. Such a solution is denoted by $\left(\left\{Y^i, U^i\right\}_{1 \leq i \leq N}, K^{(N)}\right)$ where, for $1 \leq i \leq N, Y^i$ is real valued and $U^i$ takes its values in $\mathbb{R}^{1 \times N}$.

Consistently, the flatness is defined by the following Skorokhod condition:
$$
    \int_0^T \frac{1}{N} \sum_{i=1}^N l\left(t, Y_t^i\right) d K_t=0.
$$

\begin{remark}
The integration in the Skorokhod condition is well-defined as process $K$ is continuous under our condition. Indeed, the solution $K$ is required to be predictable, in contrast the jump from the MPP driver is non-accessible, thus the jump from $K$ is disjoint from the jump from $N$. As a result, if there exists one $i$ such that $N^i$ jump, then process $K$ has no jump. On the other hand, if for all $i$, $N^i$ has no jump and as the reflection boundary $l(t,y)$ is continuous, process $K$ has no jump, either, which implies that the process $K$ has no jump which means that  $K$ is continuous. 
\end{remark}

\subsection{The particle system with constant driver}
In this subsection, we consider the case where the driver does not depend on $Y$. The equation (\ref{eq particle}) can be rewritten as
\begin{equation}
\label{eq particle fix}
\begin{cases}Y_t^i=\xi^i+\int_t^T f^i_u d u-\int_t^T \int_E \sum_{j=1}^N U_u^{i, j}(e) \tilde{\mu}^j(du,de)+K_T^{(N)}-K_t^{(N)}, & \forall 1 \leq i \leq N, \\ \frac{1}{N} \sum_{i=1}^N l\left(t, Y_t^i\right) \geq 0, & 0 \leq t \leq T.\end{cases}   
\end{equation}

In this case, assumption $\left(H_{\xi}\right)$ and $\left(H_f\right)$ can be reduced to

$\left(H_{\xi}'\right)$ The terminal condition $\xi^i$, $1\leq i \leq N$, are $\mathcal{F}_T$-measurable random variable bounded by some constant $M>0$ such that
$$
\mathbb{E}[l(T, \xi^i)] \geq 0 .
$$

$\left(H_f'\right)$ The map $(\omega, t) \mapsto f(\omega, t)$ is $\mathbb{F}$-progressively measurable and for each $t \in[0, T]$, $f_t, 0$ is bounded by some constant $L$, $\mathbb{P}$-a.s.


Before stating the result, let us introduce some further notations. Let us consider the progressively measurable process $\psi^{(N)}$ defined by
$$
\psi_t^{(N)}=\inf \left\{x \geq 0: \frac{1}{N} \sum_{i=1}^N l\left(t, x+y_t^i\right) \geq 0\right\}, \quad 0 \leq t \leq T,
$$
where we set, for all $1 \leq i \leq N$,
$$
y_t^i=\mathbb{E}\left[\xi^i+\int_t^T f^i_u d u \mid \mathscr{F}_t^{(N)}\right], \quad 0 \leq t \leq T .
$$

\begin{thm}
\label{thm well-poedness particle}
Assume $\left(H_{\xi}'\right)$, $\left(H_f'\right)$,  $\left(H_l\right)$ and $\left(H^{'}_l\right)$ hold. Then the multidimensional reflected BSDE (\ref{eq particle fix}) admits a unique flat solution in the product space $\mathcal{S}^\infty \times \mathcal{J}^\infty \times \mathcal{A}^D$.
Moreover, we have, for $1 \leq i \leq N$,
$$
Y_t^i=y_t^i+S_t, \quad 0 \leq t \leq T,
$$
where $S$ is the Snell envelope of the process $\psi^{(N)}$.
\end{thm}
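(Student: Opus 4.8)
The plan is to exploit that in (\ref{eq particle fix}) the driver $f^i_u$ no longer depends on $(Y,U)$, so the $N$ equations are coupled only through the single scalar reflecting process $K^{(N)}$. First I would solve the decoupled, non-reflected part: for each $i$ the martingale $\mathbb E[\xi^i+\int_0^T f^i_u\,du\mid\mathcal F^N_t]$ admits, by the predictable representation property of the jump filtration $\mathbb F^N$, a representation yielding predictable integrands $u^{i,j}$ with
$$
y^i_t=\xi^i+\int_t^T f^i_u\,du-\int_t^T\!\!\int_E\sum_{j=1}^N u^{i,j}_u(e)\,\tilde\mu^j(du,de),\qquad 0\le t\le T.
$$
Boundedness of $\xi^i$ and $f^i$ from $\left(H'_\xi\right)$ and $\left(H'_f\right)$ gives $y^i\in\mathcal S^\infty$ and, via the BMO-type estimates already used in Section \ref{sec 3}, $u^{i,j}\in\mathcal J^\infty$.

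\textbf{Candidate solution.} I would then look for a solution of the form $Y^i_t=y^i_t+S_t$ with a common adapted process $S$, jump integrands $U^{i,j}=u^{i,j}+s^{j}$, and $K^{(N)}$ obtained from $S$. Substituting this ansatz into (\ref{eq particle fix}) and using the equation for $y^i$, the system collapses to a single scalar problem for $S$: it must be a supermartingale with $S_T=0$ dominating the obstacle, $S_t\ge\psi^{(N)}_t$ (which is exactly the constraint $\frac1N\sum_i l(t,Y^i_t)\ge0$ by strict monotonicity of $l$), with Doob--Meyer increasing part flat off the contact set $\{S=\psi^{(N)}\}$. This is precisely the optimal-stopping characterisation of the Snell envelope, so I would define $S_t=\operatorname*{ess\,sup}_{\tau\in\mathcal T^N_t}\mathbb E\!\left[\psi^{(N)}_\tau\mid\mathcal F^N_t\right]$.

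\textbf{Existence.} Invoking the theory of Snell envelopes for a càdlàg, class-D obstacle, $S$ is the smallest supermartingale dominating $\psi^{(N)}$; its Doob--Meyer decomposition reads $S_t=S_0+M_t-K^{(N)}_t$ with $K^{(N)}$ predictable, nondecreasing, $K^{(N)}_0=0$, and by predictable representation $M_t=\int_0^t\!\int_E\sum_j s^j_u(e)\,\tilde\mu^j(du,de)$. Boundedness of $\psi^{(N)}$, inherited from the bound on $y^i$ together with $\left(H_l\right)$, $\left(H'_l\right)$ and $|L_t(0)|\le L$, yields $S\in\mathcal S^\infty$, hence $Y^i\in\mathcal S^\infty$, $K^{(N)}\in\mathcal A^D$ and $U^{i,j}\in\mathcal J^\infty$. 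Reading the decomposition backwards from $T$ recovers exactly (\ref{eq particle fix}) with $U^{i,j}=u^{i,j}+s^j$, while $S_t\ge\psi^{(N)}_t$ gives the constraint and the flatness $\int_0^T(S_{t}-\psi^{(N)}_{t})\,dK^{(N)}_t=0$ of the Snell increasing process gives the Skorokhod condition $\int_0^T\frac1N\sum_i l(t,Y^i_t)\,dK^{(N)}_t=0$.

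\textbf{Uniqueness and the main obstacle.} For any flat solution, the non-reflected BSDEs pin down the $y^i$ uniquely, and conditioning (\ref{eq particle fix}) on $\mathcal F^N_t$ shows $Y^i_t-y^i_t=\mathbb E[K^{(N)}_T-K^{(N)}_t\mid\mathcal F^N_t]$ is independent of $i$ and defines a supermartingale $S$ with $S\ge\psi^{(N)}$ and $S_T=0$; minimality of the Snell envelope together with the Skorokhod condition then forces $S$ to coincide with the envelope, giving uniqueness. The step I expect to be delicate is the construction of $S$ in the jump filtration: one must verify that $\psi^{(N)}$ is càdlàg and of class D so that the envelope and its Doob--Meyer decomposition exist, obtain the $\tilde\mu^j$-representation of its martingale part, and, the genuinely subtle point, reconcile the terminal value, since $S_T=0$ whereas the envelope carries $S_T=\psi^{(N)}_T$; this is consistent only because the existence of a flat solution forces the pathwise terminal constraint $\frac1N\sum_i l(T,\xi^i)\ge0$, i.e.\ $\psi^{(N)}_T=0$. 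Establishing the flatness/Skorokhod identification rigorously and the $\mathcal J^\infty$-bounds on the integrands are the remaining technical hurdles.
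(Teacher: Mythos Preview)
Your plan coincides with the paper's proof: solve the unreflected equations for $y^i$, take $S$ to be the Snell envelope of $\psi^{(N)}$, read off $K^{(N)}$ from its Doob--Meyer decomposition, set $Y^i=y^i+S$, and verify the constraint and the Skorokhod condition exactly as you describe (the paper applies the martingale representation to the combined martingale $\mathbb E[\xi^i+\int_0^T f^i_u\,du+K^{(N)}_T\mid\mathscr F^{(N)}_t]$ rather than splitting $U^{i,j}=u^{i,j}+s^j$, but this is cosmetic). The only place where the paper is materially more detailed is uniqueness: minimality of the Snell envelope gives only $\tilde Y^i\ge Y^i$ for any competing flat solution, and the reverse inequality is not an immediate consequence of ``Skorokhod''; the paper introduces, on the event $\{\tilde Y^i_t>Y^i_t\}$, the stopping time $\tau=\inf\{u\ge t:\tilde Y^i_u=Y^i_u\}$, uses strict monotonicity of $l$ together with flatness of $\tilde K^{(N)}$ to obtain $d\tilde K^{(N)}\equiv 0$ on $[t,\tau)$, and then reaches the contradiction $0>\mathbb E\bigl[(Y^i_t-\tilde Y^i_t)\mathbf 1_{\tilde Y^i_t>Y^i_t}\bigr]=\mathbb E\bigl[(K^{(N)}_\tau-K^{(N)}_t)\mathbf 1_{\tilde Y^i_t>Y^i_t}\bigr]\ge 0$. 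You should make this step explicit rather than absorb it into a one-line appeal to minimality plus Skorokhod.
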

\begin{proof}
Let us start by constructing a solution. We observe that the process $\psi^{(N)}$ can be written as
$$
\psi_t^{(N)}=L_t\left(y_t^1, \ldots, y_t^N\right), \quad 0 \leq t \leq T,
$$
where, for any $P=\left(P^1, \ldots, P^N\right)$, $Q=\left(Q^1, \ldots, Q^N\right)$ in $\mathbb{R}^N$,
$$
L_t(P)=L\left(P^1, \ldots, P^N\right)=\inf \left\{x \geq 0: \frac{1}{N} \sum_{i=1}^N l\left(t, x+P^i\right) \geq 0\right\}.
$$

As pointed out in \cite{briand2021particles}, $L$ is Lipschitz continuous. Precisely,
\begin{equation}
\label{eq m/M}
|L_t(P)-L_t(Q)| \leq \kappa \frac{1}{N} \sum_{j=1}^N\left|P^j-Q^j\right|.
\end{equation}

Besides, the process $\psi^{(N)}$ belongs to $\mathcal{S}^\infty$. 
Indeed, let us set $x_0:=\inf \{x \geq 0: l(t, x) \geq 0\}$ which is finite in view of the assumptions on $l$. We have
\begin{equation}
\label{eq psi}
\left|\psi_t^{(N)}\right|=\left|L_t\left(y_t^1, \ldots, y_t^N\right)-L_t(0)+L_t(0)\right| \leq x_0+ \kappa \frac{1}{N} \sum_j\left|y_t^j\right|. 
\end{equation}

Since $\psi^{(N)}_t$ is in $\mathcal{S}^\infty$, its Snell envelope $S_t = \underset{\tau \in \mathcal{T}_t^N}{\operatorname{ess} \sup }\mathbb{E}\left[\psi_\tau^{(N)} \mid \mathscr{F}_t^{(N)}\right]$ exists and belongs to $\mathcal{S}^\infty$. In fact $S_t$ can be taken as a right continuous $\mathscr{F}^{(N)}$-supermartingale of class (D). Its Doob-Meyer decomposition provides us the existence and uniqueness of $\left(K^{(N)}, M^{(N)}\right)$, with $K^{(N)}$ a non-decreasing process starting from 0 and $M^{(N)}$ an $\mathscr{F}^{(N)}$-martingale such that
\begin{equation}
\label{doob mayer}
S_t=M_t^{(N)}-K_t^{(N)}, \quad 0 \leq t \leq T .
\end{equation}

Thus,
$$
S_t=\mathbb{E}\left[M_T^{(N)} \mid \mathscr{F}_t^{(N)}\right]-K_t^{(N)}=\mathbb{E}\left[S_T+K_T^{(N)} \mid \mathscr{F}_t^{(N)}\right]-K_t^{(N)}.
$$

Define, for $1 \leq i \leq N$,
$$
Y_t^i=y_t^i+S_t, \quad 0 \leq t \leq T.
$$

We have,
$$
\begin{aligned}
Y_t^i & =\mathbb{E}\left[\xi^i+\int_t^T f^i_u d u \mid \mathscr{F}_t^{(N)}\right]+\mathbb{E}\left[S_T+K_T^{(N)} \mid \mathscr{F}_t^{(N)}\right]-K_t^{(N)} \\
& =\mathbb{E}\left[\xi^i+\int_0^T f^i_u d u+K_T^{(N)} \mid \mathscr{F}_t^{(N)}\right]-\int_0^t f^i_u d u-K_t^{(N)} .
\end{aligned}
$$
By the definition of $\psi_t^{(N)}$, we know that $\psi_T^{(N)}=0$, thus $S_T=0$, the last equality holds.

We can apply Theorem 1.1 in \cite{kunita2004representation} , which gave a representation theorem of local martingales, to deduce that for some $U^i$ in $\mathcal{J}^\infty$ 
$$
Y_t^i=\mathbb{E}\left[\xi^i+\int_0^T f^i_u d u+K_T^{(N)}\right]+\int_0^t \int_E \sum_{j=1}^N U_u^{i, j}(e) \tilde{\mu}^j(du,de)-\int_0^t f^i_u d u-K_t^{(N)} .
$$

We verify easily that $\left(\left\{Y^i, U^i\right\}_{1 \leq i \leq N}, K^{(N)}\right)$ is a solution to (\ref{eq particle fix}).
For the constraint, since $l$ is nondecreasing, we have, by definition of $\psi^{(N)}$,
$$
\frac{1}{N} \sum_{i=1}^N l\left(t, Y_t^i\right)=\frac{1}{N} \sum_{i=1}^N l\left(t, y_t^i+S_t\right) \geq \frac{1}{N} \sum_{i=1}^N l\left(t, y_t^i+\psi_t^{(N)}\right) \geq 0 .
$$

Next, we are going to prove that the Skorokhod condition. Since $S$ is the Snell envelope of $\psi^{(N)}$ and $K^{(N)}$ is the associated nondecreasing process, $S_{t}=\psi_{t}^{(N)}, d K_t^{(N)}$-almost everywhere. 
Indeed, in view of equation (\ref{doob mayer}), we have 
$$
K_T^{(N)} - K_t^{(N)} = M_T^{(N)}-S_T-M_t^{(N)}+S_t=M_T^{(N)}-M_t^{(N)}+S_t.
$$
If $d K_t^{(N)} > 0$, then $K_u^{(N)} > K_t^{(N)}$, where $t<u\leq T$. Thus, we can deduce that
$$
\mathbb{E}\left[K_T^{(N)} - K_t^{(N)}\mid \mathscr{F}_t^{(N)} \right] > \mathbb{E}\left[K_T^{(N)} - K_u^{(N)} \mid \mathscr{F}_t^{(N)}\right], \quad \forall u \in (t,T], 
$$
which can implies
$$
\mathbb{E}\left[S_t \mid \mathscr{F}_t^{(N)} \right] > \mathbb{E}\left[S_u \mid \mathscr{F}_t^{(N)}\right], \quad \forall u \in (t,T],
$$
i.e.
$$
\begin{aligned}
\underset{\tau \in \mathcal{T}_t^N}{\operatorname{ess} \sup }\mathbb{E}\left[\psi_\tau^{(N)} \mid \mathscr{F}_t^{(N)}\right]&=\mathbb{E}\left[\underset{\tau \in \mathcal{T}_t^N}{\operatorname{ess} \sup }\mathbb{E}\left[\psi_\tau^{(N)} \mid \mathscr{F}_t^{(N)}\right] \mid \mathscr{F}_t^{(N)} \right]\\
&> \mathbb{E}\left[\underset{\tau \in \mathcal{T}_u^N}{\operatorname{ess} \sup }\mathbb{E}\left[\psi_\tau^{(N)} \mid \mathscr{F}_u^{(N)}\right] \mid \mathscr{F}_t^{(N)}\right]\\
&\geq \underset{\tau \in \mathcal{T}_u^N}{\operatorname{ess} \sup } \mathbb{E}\left[\mathbb{E}\left[\psi_\tau^{(N)} \mid \mathscr{F}_u^{(N)}\right] \mid \mathscr{F}_t^{(N)}\right]\\
&= \underset{\tau \in \mathcal{T}_u^N}{\operatorname{ess} \sup }\mathbb{E}\left[\psi_\tau^{(N)} \mid \mathscr{F}_u^{(N)}\right], \quad \forall u \in (t,T].
\end{aligned}
$$
Thus, we can obtain that $S_t = \underset{\tau \in \mathcal{T}_t^N}{\operatorname{ess} \sup }\mathbb{E}\left[\psi_\tau^{(N)} \mid \mathscr{F}_t^{(N)}\right] = \psi_t^{(N)}, d K_t^{(N)}$-almost everywhere.

Since $K^{(N)}$ is nondecreasing and $\psi^{(N)}$ nonnegative, define set $A = \left\{S_t=0\right\}$, we obtain $\psi_{s}^{(N)} = 0$ for $s \in [t,T]$ on set $A$ which deduce that $ d K_s^{(N)} = 0$ for $s \in [t,T]$ on set $A$. Thus, we have
$$
\begin{aligned}
\int_0^T \frac{1}{N} \sum_{i=1}^N l\left(t, Y_{t}^i\right) d K_t^{(N)} & =\int_0^T \frac{1}{N} \sum_{i=1}^N l\left(t, y_{t}^i+S_{t}\right) \mathbf{1}_{S_{t}>0} d K_t^{(N)} \\
& =\int_0^T \frac{1}{N} \sum_{i=1}^N l\left(t, y_{t}^i+\psi_{t}^{(N)}\right) \mathbf{1}_{\psi_{t}^{(N)}>0} d K_t^{(N)}=0
\end{aligned}
$$
by definition of $\psi^{(N)}$.

Let us turn to the uniqueness. Consider another flat solution $\left(\left\{\tilde{Y}^i, \tilde{U}^i\right\}_{1 \leq i \leq N}, \tilde{K}^{(N)}\right)$. We have
$$
\tilde{Y}_t^i=y_t^i+\mathbb{E}\left[S_T+\tilde{K}_T^{(N)} \mid \mathscr{F}_t^{(N)}\right]-\tilde{K}_t^{(N)}
$$
and, since the constraint is satisfied, by definition of $\psi^{(N)}$, the supermartingale
$$
\mathbb{E}\left[S_T+\tilde{K}_T^{(N)} \mid \mathscr{F}_t^{(N)}\right]-\tilde{K}_t^{(N)}
$$
is bounded from below by the process $\psi_t^{(N)}$. Since $S$ is the Snell envelope of $\psi^{(N)}$, we have
$$
\mathbb{E}\left[S_T+\tilde{K}_T^{(N)} \mid \mathscr{F}_t^{(N)}\right]-\tilde{K}_t^{(N)} \geq S_t, \quad \tilde{Y}_t^i \geq Y_t^i.
$$

Let us suppose that there exists $1\leq i \leq N$ and $0\leq t\leq T$ such that $\mathbb{P}\left(\tilde{Y}_t^i>Y_t^i\right)>0$. Let us consider the stopping time
$$
\tau=\inf \left\{u \geq t: \tilde{Y}_u^i=Y_u^i\right\}.
$$

Then, on the set $\left\{\tilde{Y}_t^i>Y_t^i\right\}, t < \tau \leq T$ and $\tilde{Y}_u^i>Y_u^i$ for $t \leq u < \tau$. Therefore, on this set, since $l$ is increasing,
$$
\sum_{j=1}^N l\left(t, \tilde{Y}_t^j\right)>\sum_{j=1}^N l\left(t, Y_t^j\right) \geq 0
$$
and $d \widetilde{K}^{(N)}_u \equiv 0$ for $t \leq u <\tau$ due to the Skorokhod condition.
We have, by definition of $\tau$,
$$
\begin{aligned}
Y_t^i-\tilde{Y}_t^i & =Y_\tau^i-\tilde{Y}_\tau^i-\int_t^\tau \int_E \sum_{j=1}^N\left(U_u^{i, j}-\widetilde{U}_u^{i, j}\right) \tilde{\mu}^j(du,de)+K_\tau^{(N)}-K_t^{(N)}-\left(\widetilde{K}_\tau^{(N)}-\widetilde{K}_t^{(N)}\right), \\
& =-\int_t^\tau \int_E \sum_{j=1}^N\left(U_u^{i, j}-\widetilde{U}_u^{i, j}\right) \tilde{\mu}^j(du,de)+K_\tau^{(N)}-K_t^{(N)}-\left(\widetilde{K}_\tau^{(N)}-\widetilde{K}_t^{(N)}\right),
\end{aligned}
$$
and, since on the set $\left\{\tilde{Y}_t^i>Y_t^i\right\}, \tilde{K}_t^{(N)}=\tilde{K}_\tau^{(N)}$,
$$
\left(Y_t^i-\tilde{Y}_t^i\right) \mathbf{1}_{\widetilde{Y}_t^i>Y_t^i}=\left(K_\tau^{(N)}-K_t^{(N)}\right) \mathbf{1}_{\widetilde{Y}_t^i>Y_t^i}-\mathbf{1}_{\widetilde{Y}_t^i>Y_t^i} \int_t^\tau \int_E \sum_{j=1}^N\left(U_u^{i, j}-\widetilde{U}_u^{i, j}\right) d \tilde{\mu}^j(du,de) .
$$

Taking the expectation, we obtain
$$
0>\mathbb{E}\left[\left(Y_t^i-\tilde{Y}_t^i\right) \mathbf{1}_{\tilde{Y}_t^i>Y_t^i}\right]=\mathbb{E}\left[\left(K_\tau^{(N)}-K_t^{(N)}\right) \mathbf{1}_{\widetilde{Y}_t^i>Y_t^i}\right] \geq 0,
$$
which is a contradiction. So $\tilde{Y}^i=Y^i$ for all $1 \leq i \leq N$. By uniqueness of the Doob-Meyer decomposition, it follows that $\widetilde{K}^{(N)}=K^{(N)}$ and $\widetilde{U}=U$.
\end{proof}

\begin{remark}
\label{rmk terminal}
Let $0 \leq t \leq T$. For $1 \leq i \leq N$,
$$
Y_s^i=\mathbb{E}\left[Y_t^i+\int_s^t f^i_u d u \mid \mathscr{F}_s^{(N)}\right]+R_s, \quad 0 \leq s \leq t,
$$
where $\left\{R_s\right\}_{0 \leq s \leq t}$ is the Snell envelope of the process
$$
\phi_s^{(N)}=\inf \left\{x \geq 0: \frac{1}{N} \sum_{i=1}^N l\left(s, x+\mathbb{E}\left[Y_t^i+\int_s^t f^i_u d u \mid \mathscr{F}_s^{(N)}\right]\right) \geq 0\right\}, \quad 0 \leq s \leq t.
$$
\end{remark}

\subsection{Propagation of chaos}

In this subsection, we deal with the case where the driver depends on $Y$. Consider equation (\ref{eq particle})

\begin{prop}
\label{prop well-posedness}
Assume assumptions $\left(H_{\xi}'\right)$, $\left(H_f\right)$,  $\left(H_l\right)$ and $\left(H^{'}_l\right)$ hold. Then the reflected BSDE (\ref{eq particle}) admits a unique flat solution in the space $\mathcal{S}^\infty \times \mathcal{J}^\infty \times \mathcal{A}^D$.
\end{prop}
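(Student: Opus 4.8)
The plan is to run the localization-and-stitching contraction scheme used in the proof of Theorem~\ref{thm well-posedness}, with the particle-system well-posedness result of Theorem~\ref{thm well-poedness particle} (and its Snell-envelope representation $Y^i=y^i+S$) playing the role that Theorem~\ref{thm fixed mean reflected} played in the scalar case. The only genuinely new ingredient relative to Theorem~\ref{thm well-posedness} is the multidimensional mean constraint, but this is already absorbed into the Lipschitz estimate~(\ref{eq m/M}) for the operator $L_t$ and the bound~(\ref{eq psi}) for $\psi^{(N)}$; consequently $\kappa$ keeps playing the same role as in the scalar argument.

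First I would fix $h>0$ and, for a frozen vector $P=(P^1,\ldots,P^N)\in(\mathcal{S}^\infty_{[T-h,T]})^N$, replace the $Y$-argument of each driver by $P^i$ and solve, on $[T-h,T]$, the particle system
\begin{equation*}
Y_t^{i,P}=\xi^i+\int_t^T f^i(u,P_u^i,U_u^{i,i,P})\,du-\int_t^T\int_E\sum_{j=1}^N U_u^{i,j,P}(e)\,\tilde{\mu}^j(du,de)+K_T^{(N),P}-K_t^{(N),P}
\end{equation*}
together with $\frac{1}{N}\sum_{i=1}^N l(t,Y_t^{i,P})\geq0$. Since this driver no longer depends on the unknowns $Y^i$, each diagonal part $y^{i,P}$ is the solution of a standard (non-reflected) multidimensional BSDEJ with driver $f^i(u,P^i_u,\cdot)$, well-posed by (the product-measure version of) Proposition~\ref{prop standard}; the off-diagonal terms $U^{i,j,P}$, $j\neq i$, enter only the martingale integral and are produced by martingale representation. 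Forming $\psi_t^{(N),P}=L_t(y_t^{1,P},\ldots,y_t^{N,P})$, taking its Snell envelope $S^P$ and setting $Y^{i,P}=y^{i,P}+S^P$ exactly as in Theorem~\ref{thm well-poedness particle} yields a unique local flat solution and defines the solution map $\Gamma(P):=(Y^{1,P},\ldots,Y^{N,P})$.

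Next I would reproduce Steps~2--3 of Theorem~\ref{thm well-posedness}. For the stability estimate, the bound~(\ref{eq psi}) on $\psi^{(N),P}$ together with the boundedness of $\xi^i$ (from $(H_{\xi}')$) and of $f(\cdot,\cdot,0)$ shows that $\Gamma$ maps a ball $\mathscr{B}_A$ of $(\mathcal{S}^\infty_{[T-h,T]})^N$ into itself once $h$ is small. For contractivity, given $P^1,P^2$ I would split $\Gamma(P^1)-\Gamma(P^2)$ into its $y$-part and its Snell-envelope part: differencing the two $y^{i}$-BSDEs and passing to the $A_\gamma$-induced measure $\tilde{\mu}^\gamma$ kills the $U^{i,i}$-difference and leaves only the $Y$-Lipschitz term, giving $\|y^{P^1}-y^{P^2}\|\leq\lambda h\,\|P^1-P^2\|$, while~(\ref{eq m/M}) and the sup-norm contractivity of the Snell envelope bound the $S$/$K^{(N)}$-part by $\kappa\lambda h\,\|P^1-P^2\|$. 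Hence $\Gamma$ is a strict contraction for $h$ small, and the Banach fixed-point theorem gives a unique local flat solution on $[T-h,T]$.

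Finally I would stitch as in Step~4 of Theorem~\ref{thm well-posedness} (cf. also Remark~\ref{rmk terminal}): take $Y_{T-h}$ as the new bounded, constraint-compatible terminal condition, solve on $[T-2h,T-h]$, and concatenate, summing the increments of $K^{(N)}$ across subintervals; the Skorokhod/flatness condition passes to the concatenation, and global uniqueness follows from local uniqueness. I expect the main obstacle to be Step~1: Theorem~\ref{thm well-poedness particle} is stated only for drivers without $U$-dependence, whereas the frozen-$Y$ system still carries $U^{i,i}$ in its driver, so one must re-derive the Snell-envelope construction on top of the $U$-handling machinery (Proposition~\ref{prop standard} plus the measure change $\tilde{\mu}^\gamma$), and simultaneously keep the off-diagonal representation terms $U^{i,j}$ under control.
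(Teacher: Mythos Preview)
Your plan is correct and would work, but the paper takes a simpler route that sidesteps precisely the obstacle you flagged in Step~1. Instead of freezing only the $Y$-argument, the paper freezes \emph{both} the $Y$- and the $U$-arguments of the driver: the fixed-point map is $\Gamma:(P,Q)\mapsto(Y,U)$, where $(Y,U,K^{(N)})$ is the flat solution of the particle system with driver $f^i(u,P^i_u,Q^{i,i}_u)$. This driver is now genuinely constant in the unknowns, so Theorem~\ref{thm well-poedness particle} applies verbatim and no extension of the Snell-envelope construction to $U$-dependent drivers is required. The contraction estimate then uses only the Lipschitz property of $f$ in both variables (no $A_\gamma$ measure change), and the $U$-increment is controlled a~posteriori by $\|\Delta U^i\|_{\mathcal{J}^\infty}\le 2\|\Delta Y^i\|_{\mathcal{S}^\infty}$ via Corollary~1 of \cite{morlais2009utility}. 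Your approach, which mirrors the proof of Theorem~\ref{thm well-posedness} more closely, trades this simplicity for a contraction in $Y$ alone, at the price of having to re-derive Theorem~\ref{thm well-poedness particle} on top of a $U$-dependent multidimensional BSDEJ and to justify the $A_\gamma$ change of measure componentwise against the off-diagonal $\tilde{\mu}^j$-integrals. Conversely, the paper's proof concludes only ``as soon as $T$ is small enough'' and does not spell out the stitching; your Step~4 (together with Remark~\ref{rmk terminal}) is what is needed to pass to general~$T$.
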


\begin{proof}
We use a fixed point argument. Define a map $\Gamma$ from $(\mathcal{S}^2\times \mathscr{M}^{2,2})^N$ into itself defined by $(Y,U)=\Gamma(P,Q)$ where $(Y, U, K)$ stands for the unique flat solution to
$$
\left\{\begin{array}{l}
Y_t^i=\xi^i+\int_t^T f^i\left(u, P_u^i,Q_u^{i,i}\right) d u-\int_t^T \int_E \sum_{j=1}^N U_u^{i, j} \tilde{\mu}^j(du,de)+K_T^{(N)}-K_t^{(N)}, \quad \forall 1 \leq i \leq N, \quad 0 \leq t \leq T, \\
\frac{1}{N} \sum_{i=1}^N l\left(t, Y_t^i\right) \geq 0, \quad 0 \leq t \leq T .
\end{array}\right.
$$
Let $\left\{Y^i,U^i\right\}_{1 \leq i \leq N}=\Gamma\left(\left\{P^i,Q^i\right\}_{1 \leq i \leq N}\right),\left\{\tilde{Y}^i,\tilde{U}^i\right\}_{1 \leq i \leq N}=\Gamma\left(\left\{\tilde{P}^i,\tilde{Q}^i\right\}_{1 \leq i \leq N}\right)$ and denote by $\Delta \cdot$ the corresponding differences. We have
$$
\left|\Delta Y_t^i\right| \leq\left|\Delta y_t^i\right|+\left|\Delta S_t\right|\leq \left|\Delta y_t^i\right| +\mathbb{E}\left[\sup _{0\leq s \leq T}\left|\Delta \psi_s^{(N)}\right| \mid \mathscr{F}_t^{(N)}\right] ,
$$
where 
$$
y_t^i=\mathbb{E}_t\left[\xi^i+\int_t^T f^i\left(u, P_u^i,Q_u^{i,i}\right) d u\right],
$$
$$
\psi_t^{(N)} = \inf \left\{x \geq 0: \frac{1}{N} \sum_{i=1}^N l\left(t, x+y_t^i\right) \geq 0\right\}, \quad 0 \leq t \leq T.
$$
Applying (\ref{eq m/M}) and the lipschitz condition, we can deduce that
$$
\begin{aligned}
\left|\Delta Y_t^i\right| &\leq \mathbb{E}\left[\int_t^T \left|f^i\left(u, P_u^i,Q_u^{i,i}\right)-f^i\left(u, \tilde{P}_u^i,\tilde{Q}_u^{i,i}\right) \right| du\mid \mathscr{F}_t^{(N)}\right]+\mathbb{E}\left[\sup _{0\leq s \leq T}\left|\Delta \psi_s^{(N)}\right| \mid \mathscr{F}_t^{(N)}\right]\\
&\leq \lambda (T-t) \left(\left\|\Delta P^i \right\|_{\mathcal{S}^\infty}+\left\|\Delta Q^i \right\|_{\mathcal{S}^\infty}\right)+\kappa \mathbb{E}\left[\sup _{0 \leq t \leq T}\left(\frac{1}{N} \sum_{j=1}^N\left|\Delta y_t^j\right|\right)\right]\\
&\leq \lambda T \left(\left\|\Delta P^i \right\|_{\mathcal{S}^\infty}+\left\|\Delta Q^i \right\|_{\mathcal{S}^\infty}\right)+\lambda T \kappa\left[\frac{1}{N} \sum_{j=1}^N \left(\left\|\Delta P^j \right\|_{\mathcal{S}^\infty}+\left\|\Delta Q^j \right\|_{\mathcal{S}^\infty}\right)\right].
\end{aligned}
$$
Thus we have
$$
\left\|\Delta Y^i\right\|_{\mathcal{S}^\infty} \leq \lambda T \left(\left\|\Delta P^i \right\|_{\mathcal{S}^\infty}+\left\|\Delta Q^i \right\|_{\mathcal{S}^\infty}\right)+\lambda T \kappa\left[\frac{1}{N} \sum_{j=1}^N \left(\left\|\Delta P^j \right\|_{\mathcal{S}^\infty}+\left\|\Delta Q^j \right\|_{\mathcal{S}^\infty}\right)\right].
$$
Summing these inequalities gives
$$\begin{aligned}
\frac{1}{N} \sum_{i=1}^N \left\|\Delta Y^i\right\|_{\mathcal{S}^\infty} \leq C_{\lambda,\kappa} T \left[\frac{1}{N} \sum_{i=1}^N \left(\left\|\Delta P^i \right\|_{\mathcal{S}^\infty}+\left\|\Delta Q^i \right\|_{\mathcal{S}^\infty}\right)\right].
\end{aligned}
$$
As process $K^{(N)}$ is continuous, in view of Corollary 1 in \cite{morlais2009utility}, we have
$$
\left\|\Delta U^i\right\|_{\mathcal{J}^\infty}\leq  2\left\|\Delta Y^i\right\|_{\mathcal{S}^\infty}.
$$
Hence,
$$
\frac{1}{N} \sum_{i=1}^N \left\|\Delta Y^i\right\|_{\mathcal{S}^\infty}+\frac{1}{N} \sum_{i=1}^N \left\|\Delta U^i\right\|_{\mathcal{J}^\infty}\leq C_{\lambda,\kappa} T \left[\frac{1}{N} \sum_{i=1}^N \left(\left\|\Delta P^i \right\|_{\mathcal{S}^\infty}+\left\|\Delta Q^i \right\|_{\mathcal{S}^\infty}\right)\right].
$$

It follows that $\Gamma$ has a unique fixed point  as soon as $T$ is small enough: there exists a unique $\left\{Y^i,U^i\right\}_{1 \leq i \leq N}$ solving (\ref{eq particle}). We deduce finally that $K^{(N)}$ is also unique.
\end{proof}

\begin{prop}
\label{estimate of Y}
Assume identically as in Theorem \ref{prop well-posedness}. There exists a constant $C$ independent of $N$ such that, for all $1 \leq i \leq N$,
$$
\left\|Y^i\right\|_{\mathcal{S}^\infty}+\left\|U^i\right\|_{\mathcal{J}^\infty}+\left\|K^{(N)}\right\|_{\mathcal{S}^\infty} \leq C\left(L+ \left\|\xi\right\|_{\infty}\right) .
$$
\end{prop}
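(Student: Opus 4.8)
The plan is to exploit the representation $Y_t^i = y_t^i + S_t$ established in Theorem \ref{thm well-poedness particle} and Proposition \ref{prop well-posedness}, where $y^i$ is the non-reflected BSDE part and $S$ is the Snell envelope of $\psi^{(N)}=L_\cdot(y^1_\cdot,\dots,y^N_\cdot)$, and to estimate each piece separately. The crucial point for $N$-independence is that the operator $L_t$ is Lipschitz with the \emph{averaged} constant $\kappa$ of (\ref{eq m/M}): the empirical mean inside $\psi^{(N)}$ never produces a factor growing with $N$, so every constant below depends only on $L$, $M:=\|\xi\|_\infty$, $\lambda$, $\kappa$ and $T$. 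I would first bound $\|y^i\|_{\mathcal S^\infty}$, then $\|S\|_{\mathcal S^\infty}$, deduce the bounds on $Y^i$ and $U^i$, and finally treat $K^{(N)}$.

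For $y^i$ I start from $y_t^i=\mathbb E_t[\xi^i+\int_t^T f^i(u,Y_u^i,U_u^{i,i})\,du]$ and linearise the $U$-dependence with the $A_\gamma$-condition exactly as in (\ref{bar Y}): passing to the measure $\tilde\mu^\gamma$ kills the jump integral, and the Lipschitz bound $|f^i(u,Y^i_u,0)|\le L+\lambda|Y^i_u|$ gives $\|y^i\|_{\mathcal S^\infty}\le M+T(L+\lambda\|Y^i\|_{\mathcal S^\infty})$. Since $Y^i=y^i+S$ with $0\le S_t\le\|\psi^{(N)}\|_{\mathcal S^\infty}$, estimate (\ref{eq psi}) yields $\|S\|_{\mathcal S^\infty}\le x_0+\kappa\max_j\|y^j\|_{\mathcal S^\infty}$. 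Writing $A:=\max_i\|y^i\|_{\mathcal S^\infty}$, these two inequalities close into the single linear relation $A\le M+TL+\lambda T\big((1+\kappa)A+x_0\big)$, which for $T$ small enough (so that $\lambda T(1+\kappa)<1$) gives $A\le C(L+M)$ with $C$ free of $N$; the general horizon then follows by the same stitching over subintervals used in Theorem \ref{thm well-posedness}, the number of pieces depending only on $T$ and $h$. Combining, $\|Y^i\|_{\mathcal S^\infty}\le A+\|S\|_{\mathcal S^\infty}\le C(L+M)$ uniformly in $N$.

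The control of $U^i$ is then immediate: since $K^{(N)}$ is continuous (see the remark following (\ref{eq particle})), Corollary 1 of \cite{morlais2009utility}---already invoked in Proposition \ref{prop well-posedness}---gives $\|U^i\|_{\mathcal J^\infty}\le 2\|Y^i\|_{\mathcal S^\infty}$, whence the uniform bound on $\|U^i\|_{\mathcal J^\infty}$.

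The main obstacle is the $\mathcal S^\infty$-bound on $K^{(N)}$, because a naive estimate only controls $\mathbb E_t[K_T^{(N)}-K_t^{(N)}]=S_t$ in conditional mean, not pathwise. Here I would use that $K^{(N)}$ is the continuous compensator in the Doob--Meyer decomposition $S=M^{(N)}-K^{(N)}$ of the Snell envelope, together with the flatness (Skorokhod) condition: $K^{(N)}$ increases only on the contact set $\{S_t=\psi^{(N)}_t\}$, and there its rate is governed by the positive part of the predictable drift of the obstacle $\psi^{(N)}=L_\cdot(y^1,\dots,y^N)$. Since $dy^i_t=-f^i(u,Y^i_u,U^{i,i}_u)\,du+(\text{jump martingale})$ has drift bounded by $L+\lambda\|Y^i\|_{\mathcal S^\infty}+\lambda\|U^i\|_{\mathcal J^\infty}\,\nu(E)^{1/2}$ and $L_t$ is $\kappa$-Lipschitz in its argument (the time-regularity of $L_t(0)$ entering through $(H''_l)$ where needed), this drift rate is bounded by a constant $\bar B$ depending only on $L,M,\lambda,\kappa$; integrating over $[0,T]$ gives $\|K^{(N)}\|_{\mathcal S^\infty}=\|K_T^{(N)}\|_\infty\le \bar B\,T$, again independent of $N$. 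Collecting the four estimates yields the claimed inequality. The delicate point throughout is keeping every constant free of $N$, which is guaranteed by the averaged Lipschitz constant $\kappa$ in (\ref{eq m/M}) and by the fact that the jump-martingale parts of the $y^i$ are absorbed into $M^{(N)}$ and never feed the compensator $K^{(N)}$.
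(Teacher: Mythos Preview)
Your treatment of $Y^i$ and $U^i$ follows essentially the paper's line: the representation $Y^i=y^i+S$, the averaged Lipschitz bound (\ref{eq m/M}) on $L_t$ so that no factor of $N$ appears, the closure of a linear inequality on a short interval followed by stitching, and Morlais' corollary for $U^i$. The paper organises the recursion a bit differently (it subdivides $[0,T]$ from the outset, works with terminal values $Y^i_{T_k}$ on each piece via Remark~\ref{rmk terminal}, and propagates the bound through an $\alpha$-weighted sum over the subdivision), but the mechanism is the same.

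The genuine gap is in your argument for $K^{(N)}$. You claim that on the contact set $dK^{(N)}$ is dominated by ``the positive part of the predictable drift of the obstacle $\psi^{(N)}=L_\cdot(y^1,\dots,y^N)$''. But $L_t$ is only Lipschitz, not $C^2$ or convex, so the composition $\psi^{(N)}$ need not be a semimartingale at all; there is no well-defined predictable drift to speak of, and hence no rate bound to integrate. The sentence ``the jump-martingale parts of the $y^i$ are absorbed into $M^{(N)}$ and never feed the compensator'' is a heuristic, not an argument: without a semimartingale decomposition of $\psi^{(N)}$ (or a pathwise total-variation bound, which fails here since the Poisson jump count is unbounded in $L^\infty$), you cannot separate what goes into $M^{(N)}$ from what goes into $K^{(N)}$. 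You also invoke $(H''_l)$, which is not assumed in this proposition.

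The paper sidesteps this obstacle-regularity issue entirely: once $\|Y^i\|_{\mathcal S^\infty}$ and $\|U^i\|_{\mathcal J^\infty}$ are bounded, it simply reads
\[
K_T^{(N)}=Y_0^i-\xi^i-\int_0^T f^i(u,Y_u^i,U_u^{i,i})\,du+\int_0^T\!\!\int_E\sum_{j=1}^N U_u^{i,j}(e)\,\tilde\mu^j(du,de)
\]
off the equation and appeals to the already-established bounds. That route uses nothing about the Snell envelope beyond what was needed for $Y^i$, which is precisely what makes it cleaner than the obstacle-drift approach you propose.
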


\begin{proof}
Let $\left(T_k\right)_{0 \leq k \leq r}$ be a subdivision of $[0, T]$ with $\max _{1 \leq k \leq r}\left(T_k-T_{k-1}\right)=\pi$. We set $I_k=\left[T_{k-1}, T_k\right]$. By Remark \ref{rmk terminal} , for $1 \leq k \leq r$ and $t \in I_k$,
$$
Y_t^i=\mathbb{E}\left[Y_{T_k}^i+\int_t^{T_k} f^i\left(u, Y_u^i,U_u^{i,i}\right) d u \mid \mathscr{F}_t^{(N)}\right]+R_t,
$$
where $\left\{R_t\right\}_{T_{k-1} \leq t \leq T_k}$ is the Snell envelope of the process
$$
\phi_t^{(N)}=\inf \left\{x \geq 0: \frac{1}{N} \sum_{i=1}^N l\left(x+\mathbb{E}\left[Y_{T_k}^i+\int_t^{T_k} f^i\left(u, Y_u^i,U_u^{i,i}\right) d u \mid \mathscr{F}_t^{(N)}\right]\right) \geq 0\right\} .
$$

In view of (\ref{eq psi}), we can obtain
\begin{equation}
\begin{aligned}
\mathbb{E}\left[Y_{T_k}^i+\int_t^{T_k} f^i\left(u, Y_u^i,U_u^{i,i}\right) d u \mid \mathscr{F}_t^{(N)}\right] &\leq \left\|Y_{T_k}^i \right\|_\infty +L+ \pi \lambda \left\|Y^i \right\|_{\mathcal{S}^\infty_{[T_{k-1},T_k]}}+\lambda \mathbb{E}_t\left[\int_t^{T_k} \left(\int_E |U_u^i|^2 \nu(de) \right)^{\frac{1}{2}}du \right]\\
&\leq \left\|Y_{T_k}^i \right\|_\infty +L+ \pi \lambda \left\|Y^i \right\|_{\mathcal{S}^\infty_{[T_{k-1},T_k]}}+\pi^{\frac{1}{2}}\lambda \mathbb{E}_t\left[\int_t^{T_k} \int_E |U_u^i|^2 \nu(de)du \right] ^{\frac{1}{2}}\\
&\leq \left\|Y_{T_k}^i \right\|_\infty +L+ \pi \lambda \left\|Y^i \right\|_{\mathcal{S}^\infty_{[T_{k-1},T_k]}}+\pi \lambda \left\|U_u^i \right\|_{\mathcal{J}^\infty_{[T_{k-1},T_k]}}\\
&\leq A(i,k)+3\pi \lambda \left\|Y^i \right\|_{\mathcal{S}^\infty_{[T_{k-1},T_k]}},
\end{aligned}
\end{equation}
where, for $1 \leq i \leq N$ and $1 \leq k \leq r$,
$$
A(j, k)=\left\|Y_{T_k}^i \right\|_\infty +L,
$$
and the last inequality is deduced by Corollary 1 from \cite{morlais2009utility}.
By the definition of $R_t$, we have
$$
R_t \leq C_\lambda \frac{1}{N} \sum_{j=1}^N\left(A(j, k)+\pi  \left\|Y^i \right\|_{\mathcal{S}^\infty_{[T_{k-1},T_k]}}\right).
$$
Thus,
\begin{equation}
\label{eq supY}
\left\|Y^i \right\|_{\mathcal{S}^\infty_{[T_{k-1},T_k]}} \leq C_\lambda\left(A(j, k)+\pi  \left\|Y^i \right\|_{\mathcal{S}^\infty_{[T_{k-1},T_k]}} \right)+C_\lambda \frac{1}{N} \sum_{j=1}^N\left(A(j, k)+\pi  \left\|Y^i \right\|_{\mathcal{S}^\infty_{[T_{k-1},T_k]}}\right).
\end{equation}
Summing these inequalities gives
$$
\frac{1}{N} \sum_{i=1}^N\left\|Y^i \right\|_{\mathcal{S}^\infty_{[T_{k-1},T_k]}} \leq C_\lambda \frac{1}{N} \sum_{j=1}^N\left(A(j, k)+\pi  \left\|Y^i \right\|_{\mathcal{S}^\infty_{[T_{k-1},T_k]}}\right) .
$$
Let us choose $\pi$ small enough to get, for $1 \leq k \leq r$,
$$
\frac{1}{N} \sum_{i=1}^N\left\|Y^i \right\|_{\mathcal{S}^\infty_{[T_{k-1},T_k]}} \leq C_\lambda \frac{1}{N} \sum_{j=1}^N A(j, k) .
$$
Let us observe that
$$
\begin{aligned}
& A(j, r)=\left\|\xi\right\|_\infty+L, \\
& A(j, k) \leq \left\|Y^i\right\|_{\mathcal{S}^\infty_{[T_{k-1},T_k]}} +L, \quad 1 \leq k \leq r-1 .
\end{aligned}
$$
Thus, for any constant $\alpha>0$,
$$
\begin{aligned}
\sum_{k=1}^r \alpha^k \frac{1}{N} \sum_{i=1}^N\left\|Y^i \right\|_{\mathcal{S}^\infty_{[T_{k-1},T_k]}}
& \leq \alpha^r \frac{C_\lambda}{N} \sum_{j=1}^N A(j, r)+ \sum_{k=1}^{r-1}\alpha^k \frac{C_\lambda}{N} \sum_{j=1}^N A(j, k)\\ 
&\leq C_\lambda \alpha^r\left(\left\|\xi\right\|_\infty+L\right)+\frac{C_\lambda}{\alpha} \sum_{k=1}^r \alpha^k \frac{1}{N} \sum_{i=1}^N\left\|Y^i \right\|_{\mathcal{S}^\infty_{[T_{k-1},T_k]}},
\end{aligned}
$$
and choosing $\alpha>C_\lambda$, we get
$$
\frac{1}{N} \sum_{i=1}^N\left\|Y^i \right\|_{\mathcal{S}^\infty}\leq C\left(\left\|\xi\right\|_\infty+L \right).
$$
With the help of this inequality, we can go back to (\ref{eq supY}) and do the same computation, to get, for $0 \leq i \leq N$,
$$
\left\|Y^i \right\|_{\mathcal{S}^\infty}\leq C\left(\left\|\xi\right\|_\infty+L \right).
$$
In view of  Corollary 1 from \cite{morlais2009utility}, we have
$$
\left\|U^i \right\|_{\mathcal{J}^\infty}\leq 2\left\|Y^i \right\|_{\mathcal{S}^\infty}\leq C\left(\left\|\xi\right\|_\infty+L \right).
$$
Since
$$
K_T^{(N)}=Y_0^i-\xi^i-\int_0^T f^i(u,Y_u^i,U_u^{i,i}) d u+\int_0^T \sum_{j=1}^N \int_E U_u^{i, j}(e) \tilde{\mu}^j(du,de)
$$
we have, with the previous estimate,
$$
\left\|K^{(N)} \right\|_{\mathcal{S}^\infty}\leq C\left(\left\|\xi\right\|_\infty+L \right).
$$
\end{proof}

Let us consider $\left(\bar{Y}^i, \bar{U}^i, K\right)$  independent copies of $(Y, U, K)$ i.e $\left(\bar{Y}^i, \bar{U}^i, K\right)$ is the flat deterministic solution to
$$
\bar{Y}_t^i=\xi^i+\int_t^T f^i\left(u, \bar{Y}_u^i,\bar{U}_u^i\right) d u-\int_t^T \int_E \bar{U}_u^i(e) \tilde{\mu}^i(du,de)+\left(K_T-K_t\right), \quad 0 \leq t \leq T,
$$
with $\mathbb{E}\left[l\left(t, \bar{Y}_t^i\right)\right] \geq 0$ and the Skorokhod condition $\int_0^T \mathbb{E}\left[l\left(t, \bar{Y}_t^i\right)\right] d K_t=0$.

\begin{thm}
Assume assumptions $\left(H_{\xi}\right)$, $\left(H_f\right)$,  $\left(H_l\right)$, $\left(H^{'}_l\right)$ and $\left(H^{''}_l\right)$  hold. Define $ \Delta Y^i:=Y^i-\bar{Y}^i, \Delta K:=K^{(N)}-K$ and $\Delta U^i:=U_t^i-\bar{U}^i e_i$ for $1 \leq i \leq N$, where $\left(e_1, \ldots, e_N\right)$ stand for the canonical basis in $\mathbb{R}^N$. Then
$$
\left\|\Delta Y^i\right\|_{\mathscr{S}^2}^2=\mathcal{O}\left(N^{-1 / 2}\right),\left\|\Delta U^i\right\|_{\mathscr{M}^{2,2}}^2=\mathcal{O}\left(N^{-1 / 4}\right) \text { and }\|\Delta K\|_{\mathscr{A}^2}^2=\mathcal{O}\left(N^{-1 / 4}\right).
$$
\end{thm}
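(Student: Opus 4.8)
The plan is to follow the stability scheme of Briand and Hibon \cite{briand2021particles}, adapted to the jump setting, in four stages: a representation-based reduction, a uniform law of large numbers for the reflection, the estimate on $\Delta Y^i$, and finally the estimates on $\Delta U^i$ and $\Delta K$ through It\^o's formula. First I would reduce everything to the backward parts. By Remark \ref{rmk rep} and Theorem \ref{thm well-poedness particle}, write $\bar Y_t^i = \bar y_t^i + (K_T - K_t)$ for the limit, with $K$ deterministic and $K_T - K_t = \sup_{t \le s \le T} L_s(\bar y_s^i)$, and $Y_t^i = y_t^i + S_t$ for the particle system, where $S$ is the Snell envelope of the empirical obstacle $\psi^{(N)}$ and $y_t^i = \mathbb{E}[\xi^i + \int_t^T f^i(u, Y_u^i, U_u^{i,i})\,du \mid \mathscr{F}_t^{(N)}]$. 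Subtracting splits $\Delta Y^i$ into a driver-induced difference $\Delta y^i$ and a reflection difference $S_t - (K_T - K_t)$. For $\Delta y^i$ I would use the Lipschitz assumption $\left(H_f\right)$ together with the $A_\gamma$-condition to pass, as in Theorem \ref{thm well-posedness} and Proposition \ref{prop well-posedness}, to the equivalent jump measure $\tilde\mu^\gamma$, obtaining a bound of $\|\Delta y^i\|$ by $\|\Delta Y^i\|$, by the empirical average $\tfrac1N\sum_j\|\Delta Y^j\|$, and by the reflection error, to be closed by a Gr\"onwall/contraction estimate on small intervals stitched over $[0,T]$.

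The crux is the uniform law of large numbers for the reflection. The particle system reflects off $\tfrac1N\sum_i l(t, Y_t^i) \ge 0$, whereas the limit reflects off $\mathbb{E}[l(t, \bar Y_t)] \ge 0$, so the whole discrepancy is driven by the centred fluctuation $\epsilon_t^N := \tfrac1N\sum_{i=1}^N l(t, \bar Y_t^i) - \mathbb{E}[l(t, \bar Y_t)]$. Since the $\bar Y^i$ are i.i.d. and $l$ has linear growth with $\bar Y \in \mathcal{S}^\infty$, the summands are bounded and $\mathbb{E}[|\epsilon_t^N|^2] \le C/N$ pointwise. I would then upgrade this to $\mathbb{E}[\sup_{0\le t\le T}|\epsilon_t^N|^2] = \mathcal{O}(N^{-1/2})$ by controlling the time modulus of $\epsilon^N$: the Lipschitz bounds $\left(H'_l\right)$ and $\left(H''_l\right)$ reduce increments of $\epsilon^N$ to increments of $\bar Y$, and Proposition \ref{prop regularity} — the $\tfrac12$-H\"older regularity of $K$ together with the single- and double-increment moment bounds (ii)--(iii) — supplies exactly the Kolmogorov--Chentsov-type estimates needed for a chaining argument over a discretization of $[0,T]$, the $N^{1/2}$ loss relative to the pointwise rate being the price of the supremum in time. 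Feeding this bound, via the Lipschitz estimate \eqref{eq m/M} for $L_t^{(N)}$, into the reflection difference and combining with the $\Delta y^i$ estimate yields $\|\Delta Y^i\|_{\mathscr{S}^2}^2 = \mathcal{O}(N^{-1/2})$.

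Finally, for $\Delta U^i$ and $\Delta K$ I would apply It\^o's formula to $|\Delta Y_t^i|^2$ in the jump framework. The martingale part produces $\|\Delta U^i\|_{\mathscr{M}^{2,2}}^2 = \mathbb{E}\int_0^T\!\int_E |\Delta U^i|^2 \nu\,ds$, while the reflection contributes the cross term $\int_0^T \Delta Y_{u^-}^i\, d\Delta K_u$. Because $\Delta K = K^{(N)} - K$ is a difference of nondecreasing processes of uniformly bounded total variation (both bounded in $\mathcal{A}^D$ by Proposition \ref{estimate of Y}), this term is dominated by $\sup_u |\Delta Y_u^i|$ times a constant, and only the \emph{unsquared} norm $\|\Delta Y^i\|_{\mathscr{S}^2} = \mathcal{O}(N^{-1/4})$ is available here; this square-root loss is precisely what degrades the rate to $\|\Delta U^i\|_{\mathscr{M}^{2,2}}^2 = \mathcal{O}(N^{-1/4})$. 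Reading $\Delta K_T$ off the two equations at $t=0$ — namely $\Delta K_T = \Delta Y_0^i - \int_0^T\big(f^i(u,Y_u^i,U_u^{i,i})-f^i(u,\bar Y_u^i,\bar U_u^i)\big)\,du + (\text{martingale difference})$ — and inserting the $\Delta U^i$ bound then gives $\|\Delta K\|_{\mathscr{A}^2}^2 = \mathcal{O}(N^{-1/4})$.

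The main obstacle is the uniform law of large numbers of the second stage: as the authors stress, the jump setting rules out the comparison theorem and the Girsanov transformation, so the supremum-in-time fluctuation cannot be simplified away and must be handled directly through the regularity estimates of Proposition \ref{prop regularity}. Secondarily, the interplay between the random Snell-envelope reflection of the particle system and the deterministic reflection of the limit is what forces the square-root loss, and hence the slower $N^{-1/4}$ rates for $\Delta U^i$ and $\Delta K$.
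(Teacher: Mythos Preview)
Your proposal is correct and follows essentially the same three-stage strategy as the paper: reduce the $\Delta Y^i$ estimate to a uniform fluctuation of the empirical reflection, control that fluctuation at rate $N^{-1/2}$ via the regularity of Proposition~\ref{prop regularity}, and then propagate to $\Delta U^i$ and $\Delta K$ through It\^o's formula, where the cross term $\int \Delta Y^i\, d\Delta K$ forces the unsquared $\|\Delta Y^i\|_{\mathscr{S}^2}$ and hence the $N^{-1/4}$ rate.

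The only differences are presentational. In the first stage the paper applies It\^o's formula to $|\Delta Y_t^i|^2$ directly and bounds the reflection cross term by $\mathbb{E}\bigl[\sup_s|\psi_s^{(N)}-\psi_s|^2\mid\mathscr{F}_t^{(N)}\bigr]$ using the Snell-envelope decomposition, then closes with a single Gr\"onwall over $[0,T]$ (no small-interval stitching, and the $A_\gamma$ change of measure is not invoked here --- only the Lipschitz bound). In the second stage the paper phrases the fluctuation as $\|\bar\psi^{(N)}-\psi\|_{\mathscr{S}^2}^2\le\kappa^2\,\mathbb{E}\bigl[\sup_t W_1^2(\nu_t^{(N)},\nu_t)\bigr]$ and then invokes the uniform Wasserstein bound of \cite{briand2021particles,fournier2015rate}, whereas you work with the centred sum $\epsilon_t^N$ and a Kolmogorov--Chentsov chaining; both routes rest on exactly the same moment and increment estimates from Proposition~\ref{prop regularity}, and your $\epsilon_t^N$ and the paper's $\bar\psi^{(N)}-\psi$ are equivalent up to the bi-Lipschitz constants of $\left(H'_l\right)$ (note that $\bar Y_t^i$ and $\bar y_t^i$ differ only by the deterministic shift $K_T-K_t$, so the fluctuations coincide). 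Your third stage matches the paper's Step~3 verbatim.
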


\begin{proof}
Recall Theorem \ref{thm well-poedness particle} and Remark \ref{rmk rep}, for all $1 \leq i \leq N$, we have
$$
\begin{aligned}
& Y_t^i=y_t^i+S_t \text { with } S_t=\underset{\tau \in \mathcal{T}_t^N}{\operatorname{ess} \sup }\mathbb{E}\left[\psi_\tau^{(N)} \mid \mathscr{F}_t^{(N)}\right], \\
& \bar{Y}_t^i=\bar{y}_t^i+R_t \text { with } R_t=\sup _{t\leq s \leq T} \psi_s=\underset{\tau \in \mathcal{T}_t^N}{\operatorname{ess} \sup } \mathbb{E}\left[\psi_\tau \mid \mathscr{F}_t^{(N)}\right],
\end{aligned}
$$
where 
$$
\begin{aligned}
& y_t^i=\mathbb{E}\left[\xi^i+\int_t^T f^i\left(u, Y_u^i,U_u^{i,i}\right) d u \mid \mathscr{F}_t^{(N)}\right],\\
& \psi_t=L_t(y_t)=\inf \left\{x \geq 0: \mathbb{E} \left[l\left(t, x+y_t\right)\right] \geq 0\right\} .
\end{aligned}
$$
Since $\{\bar{Y}^i\}_{1\leq i \leq N}$, $\xi^i$ are independent copies of $Y$, $\xi$,
$$
\begin{aligned}
\bar{y}_t^i=\mathbb{E}\left[\xi^i+\int_t^T f^i\left(u, \bar{Y}_u^i,\bar{U}_u^{i}\right) d u \mid \mathscr{F}_t^i\right] & =\mathbb{E}\left[\xi^i+\int_t^T f^i\left(u, \bar{Y}_u^i,\bar{U}_u^{i}\right) d u \mid \mathscr{F}_t^{(N)}\right], \\
& :=\mathbb{E}\left[\xi^i+\int_t^T \bar{f}_u^i d u \mid \mathscr{F}_t^{(N)}\right].
\end{aligned}
$$
We also define
$$
\bar{\psi}_t^{(N)}=\inf \left\{x \geq 0: \frac{1}{N} \sum_{j=1}^N l\left(t, x+\bar{y}_t^j\right) \geq 0\right\} .
$$

The proof is divided in three steps. In the first step, we show that the convergence rate for the $Y$-component of the solution is given by the convergence rate of $\bar{\psi}^{(N)}$ to $\psi$. In the second step, we study the convergence rate of $\bar{\psi}^{(N)}$ to $\psi$. In the third step, we deduce from the previous steps the convergence rate for the $U$-component and the $K$-component of the solution.

\textbf{Step 1.} Applying Itô's formula to $\Delta \left|\Delta Y_t^i\right|^2$, we obtain, for $t \geq r$,
$$
\begin{aligned}
\left|\Delta Y_t^i\right|^2 & \leq \int_t^T 2 \left|\Delta Y_u^i\right| \left|f^i(u,Y_u^i,U_u^{i,i})-f(u,\bar{Y}_u^i,\bar{U}_u^i)\right|du-\int_t^T\int_E 2 \left|\Delta Y_u^i\right| \left|\Delta U_u^i(e)\right|\tilde{\mu}^j(du,de)\\
&\quad +\int_t^T 2 \left|\Delta Y_u^i\right|d\Delta K_u - \int_t^T\int_E  \left|\Delta U_u^i(e)\right|^2\mu(du,de)\\
&= \mathbb{E}\left[\int_t^T 2 \left|\Delta Y_u^i\right| \left|f^i(u,Y_u^i,U_u^{i,i})-f(u,\bar{Y}_u^i,\bar{U}_u^i)\right|du +\int_t^T 2 \left|\Delta Y_u^i\right|d\Delta K_u - \int_t^T\int_E  \left|\Delta U_u^i(e)\right|^2\nu(de)du \mid \mathscr{F}_t^{(N)}\right] \\
&\leq  \mathbb{E}\left[\int_t^T C_\lambda \left|\Delta Y_u^i\right|^2du+\int_t^T C_\lambda \left|\Delta Y_u^i\right| \left(\int_E (\Delta U_u^i(e))^2\nu(de) \right)^{\frac{1}{2}}du de)du \mid \mathscr{F}_t^{(N)}\right]\\
&\quad + \mathbb{E}\left[\int_t^T 2 \left|\Delta Y_u^i\right|d\Delta K_u de)du \mid \mathscr{F}_t^{(N)}\right]-\mathbb{E}\left[\int_t^T\int_E  \left|\Delta U_u^i(e)\right|^2\nu(de)dude)du \mid \mathscr{F}_t^{(N)}\right]\\
&\leq C_\lambda \mathbb{E}\left[\int_t^T\left|\Delta Y_u^i\right|^2 d u \mid \mathscr{F}_t^{(N)}\right]+\mathbb{E}\left[\sup _{t \leq s \leq T}\left|\psi_s^{(N)}-\psi_s\right|^2 \mid \mathscr{F}_t^{(N)}\right].
\end{aligned}
$$ 
As $S_t = M_t^{N}-K_t^{N}$, the last inequality holds.
Hence,
$$
\mathbb{E}\left[\sup _{r \leq t \leq T}\left|\Delta Y_t^i\right|^2\right]  \leq C_\lambda \left(\mathbb{E}\left[\int_r^T\left|\Delta Y_u^i\right|^2 d u \mid \mathscr{F}_t^{(N)}\right]+\mathbb{E}\left[\sup _{r \leq s \leq T}\left|\Delta \psi_s^{(N)}\right|^2 \mid \mathscr{F}_t^{(N)}\right]+\mathbb{E}\left[\sup _{0 \leq s \leq T}\left|\bar{\psi}_s^{(N)}-\psi_s\right|^2 \mid \mathscr{F}_t^{(N)}\right]\right),
$$
where $\Delta \psi_t^{(N)} = \psi_t^{(N)} -\bar{\psi}_t^{(N)}$.
On the other hand, using (\ref{eq m/M}), when $t \geq r$,
$$
\left|\Delta \psi_t^{(N)}\right| \leq \kappa \frac{1}{N} \sum_{j=1}^N\left|\Delta y_t^j\right| \leq \lambda \kappa \mathbb{E}\left[\frac{1}{N} \sum_{j=1}^N \int_r^T\left|\Delta Y_u^j\right| d u \mid \mathscr{F}_t^{(N)}\right] .
$$
Using Doob's inequality and then Hölder's inequality, we find, for all $0 \leq r \leq T$,
\begin{equation}
\label{estimate sup Yi}
\begin{aligned}
& \mathbb{E}\left[\sup _{r \leq t \leq T}\left|\Delta Y_t^i\right|^2\right] \\
& \quad \leq C_{\lambda,\kappa,T} \left(\mathbb{E}\left[\int_r^T\left|\Delta Y_u^i\right|^2 d u\right]+ \mathbb{E}\left[\frac{1}{N} \sum_{j=1}^N \int_r^T\left|\Delta Y_u^j\right|^2 d u\right]+\left\|\bar{\psi}^{(N)}-\psi\right\|_{\mathscr{S}^2}^2 \right).
\end{aligned}
\end{equation}
Summing these inequalities, we obtain
$$
\begin{aligned}
\mathbb{E}\left[\frac{1}{N} \sum_{i=1}^N \sup _{r \leq t \leq T}\left|\Delta Y_t^i\right|^2\right] & \leq C_{\lambda,\kappa,T} \left( \int_r^T \mathbb{E}\left[\frac{1}{N} \sum_{j=1}^N\left|\Delta Y_u^j\right|^2\right] d u+\left\|\bar{\psi}^{(N)}-\psi\right\|_{\mathscr{S}^2}^2 \right)\\
& \leq C_{\lambda,\kappa,T} \left(  \int_r^T \mathbb{E}\left[\frac{1}{N} \sum_{j=1}^N \sup _{u \leq s \leq T}\left|\Delta Y_s^j\right|^2\right] d u+\left\|\bar{\psi}^{(N)}-\psi\right\|_{\mathscr{S}^2}^2\right).
\end{aligned}
$$
and Gronwall's Lemma gives
$$
\mathbb{E}\left[\frac{1}{N} \sum_{i=1}^N \sup _{0 \leq t \leq T}\left|\Delta Y_t^i\right|^2\right] \leq C_{\lambda,\kappa,T}\left\|\bar{\psi}^{(N)}-\psi\right\|_{\mathscr{S}^2}^2 .
$$

Coming back to the estimate (\ref{estimate sup Yi}), we finally deduce that
$$
\left\|\Delta Y^i\right\|_{\mathscr{S}^2}^2 \leq C_{\lambda, \kappa, T}\left\|\bar{\psi}^{(N)}-\psi\right\|_{\mathscr{S}^2}^2 .
$$
\textbf{Step 2.} For all $0 \leq t \leq T$, denote $\nu_t$ the common law of the random variables $\left\{\bar{y}_t^i\right\}_{1 \leq i \leq N}$ and their empirical law $\nu_t^{(N)}:=\frac{1}{N} \sum_{i=1}^N \delta_{\bar{y}_t^i}$.

Let us define $H:(x, \mu) \in \mathbb{R} \times \mathcal{P}_1(\mathbb{R}) \mapsto \int l(t, x+y) \mu(d y)$. For each probability measure $\mu$, $x \longmapsto H(x, \mu)$ is nondecreasing and bi-Lipschitz with the same constants as $l$. Let us also introduce
$$
\psi_t^*=\inf \left\{x \in \mathbb{R}: \mathbb{E}\left[l\left(t, x+\bar{y}_t^i\right)\right] \geq 0\right\}
$$
and $\bar{\psi}_t^{(N)^*}$ defined in the same way. Since $l$ is continuous, one has
$$
H\left(\psi_t^*, \nu_t\right)=H\left(\bar{\psi}_t^{(N)^*}, \nu_t^{(N)}\right)=0 .
$$

Of course, $\left|\psi_t-\bar{\psi}_t^{(N)}\right| \leq\left|\psi_t^*-\bar{\psi}_t^{(N)^*}\right|$ so that
\begin{equation}
\label{phi}
\begin{aligned}
\left\|\bar{\psi}^{(N)}-\psi\right\|_{\mathscr{S}^2}^2 & \leq \frac{1}{\underline\kappa^2} \mathbb{E}\left[\sup _{0 \leq t \leq T}\left|H\left(\bar{\psi}^{(N)^*}, \nu_t^{(N)}\right)-H\left(\psi_t^*, \nu_t^{(N)}\right)\right|^2\right], \\
& \leq \frac{1}{\underline\kappa^2} \mathbb{E}\left[\sup _{0 \leq t \leq T}\left|H\left(\psi_t^*, \nu_t^{(N)}\right)-H\left(\psi_t^*, \nu_t\right)\right|^2\right] .
\end{aligned}
\end{equation}

Since $l$ is Lipschitz, we deduce from (\ref{phi}) that
$$
\left\|\bar{\psi}^{(N)}-\psi\right\|_{\mathscr{S}^2}^2 \leq \kappa^2 \mathbb{E}\left[\sup _{0\leq t \leq T} W_1^2\left(\nu_t^{(N)}, \nu_t\right)\right] .
$$

We recall the bound of the right hand side of the previous inequality from \cite{briand2021particles,fournier2015rate}. Based on the regularity in Proposition \ref{prop regularity} and Proposition \ref{estimate of Y}, we have:
$$
\left\|\bar{\psi}^{(N)}-\psi\right\|_{\mathscr{S}^2}^2 \leq \frac{C}{\sqrt{N}}
$$
with $C$ depending on $p$ and all parameters.

Hence, we deduce that $\left\|\Delta Y^i\right\|_{\mathscr{S}^2}^2=\mathcal{O}\left(N^{-1 / 2}\right)$.

\textbf{Step 3.}  We know that $\left(\Delta Y^i, \Delta Z^i, \Delta K\right)$ verifies
$$
\Delta Y_t^i=\int_t^T\left(f^i\left(u, Y_u^i,U_u^{i,i}\right)-f^i\left(u, \bar{Y}_u^i,\bar{U}_u^i\right)\right) d u-\int_t^T \int_E \sum_{j=1}^N \Delta U_u^{i, j}(e) \tilde{\mu}(du,de)+\Delta K_T-\Delta K_t, \quad \forall t \in[0, T].
$$

Use It$\hat o$'s formula, we can deduce that
$$
\begin{aligned}
\mathbb{E}\left[\int_0^T \int_E \sum_{j=1}^N\left|\Delta U_u^{i, j}\right|^2 d u\right]\leq C_{ \lambda, T}\left\|\Delta Y^i\right\|_{\mathscr{S}^2}^2+C\left\|\Delta Y^i\right\|_{\mathscr{S}^2}\left(\mathbb{E}\left[\left|K_T^{(N)}\right|^2\right]+\left|K_T\right|^2\right)^{1 / 2}.
\end{aligned}
$$

In view of Proposition \ref{estimate of Y} thus we obtain the rate of convergence
$$
\left\|\Delta U^i\right\|_{\mathscr{M}^{2,2}}^2=O\left(\left\|\Delta Y^i\right\|_{\mathscr{S}^2}\right) .
$$

Finally, let us write
$$
\Delta K_t=\Delta Y_0^i-\Delta Y_t^i-\int_0^t\left(f^i\left(u, Y_u^i,U_u^{i,i}\right)-f^i\left(u, \bar{Y}_u^i,\bar{U}_u^i\right)\right) d u+\int_0^t \int_E \sum_{j=1}^N \Delta U_u^{i, j}(e) \tilde{\mu}(du,de) \quad \forall t \in[0, T]
$$
to obtain
$$
\begin{aligned}
\|\Delta K\|_{\mathscr{A}^2}^2 & \leq C\left\|\Delta Y^i\right\|_{\mathscr{S}^2}^2+C\left\|\Delta U^i\right\|_{\mathscr{M}^{2,2}}^2+C_T \mathbb{E}\left[\int_0^T\left|f^i\left(u, Y_u^i,U_u^{i,i}\right)-f^i\left(u, \bar{Y}_u^i,\bar{U}_u^i\right)\right|^2 d u\right] \\
& \leq C_{\lambda, T}\left\|\Delta Y^i\right\|_{\mathscr{S}^2}^2+C\left\|\Delta U^i\right\|_{\mathscr{M}^{2,2}}^2=O\left(\left\|\Delta Y^i\right\|_{\mathscr{S}^2}\right) .
\end{aligned}
$$

This ends the proof of our main result.

\end{proof}

\bibliographystyle{plain}
\bibliography{RBSDEJ}

\end{document}